\documentclass[12pt]{article}

\usepackage{amsmath}
\usepackage{amsthm}
\usepackage{amsfonts}
\usepackage{mathrsfs}
\usepackage{stmaryrd}
\usepackage{setspace}
\usepackage{fullpage}
\usepackage{amssymb}
\usepackage{breqn}
\usepackage{enumitem}
\usepackage{bbold}
\usepackage{authblk}
\usepackage{comment}
\usepackage{hyperref}
\usepackage{pgf,tikz}
\usepackage{graphicx}
\usepackage{subcaption}

\bibliographystyle{plain}

\newtheorem{thm}{Theorem}[section]

\newtheorem{lemma}[thm]{Lemma}

\newtheorem{proposition}[thm]{Proposition}
\newtheorem{prop}[thm]{Proposition}

\newtheorem{clm}[thm]{Claim}

\newcommand\ex{\ensuremath{\mathrm{ex}}}

\newcommand\cG{{\mathcal G}}
\newcommand\cH{{\mathcal H}}

\newcommand\cK{{\mathcal K}}

\newtheorem*{thm*}{Theorem}
\newtheorem*{prop*}{Proposition}
\newcommand{\ignore}[1]{}

\title{On the Turán number of the expansion of the $t$-fan}
\author{
Xin Cheng\thanks{\small School of Mathematics and Statistics, Northwestern Polytechnical University and Xi'an-Budapest Joint Research Center for Combinatorics, Xi'an 710129, Shaanxi, P.R. China. Email:
\small \texttt{xincheng@mail.nwpu.edu.cn}.}\,, \hspace{0.2em}
D\'{a}niel Gerbner\thanks{\small Alfr\'ed R\'enyi Institute of Mathematics. Email:
\small \texttt{gerbner.daniel@renyi.hu}.}\,, \hspace{0.2em} 
Hilal Hama Karim\thanks{\small Department of Computer Science and Information Theory, Faculty of Electrical Engineering and Informatics, Budapest University of Technology and Economics, Műegyetem rkp. 3., H-1111 Budapest, Hungary. E-mail: \texttt{hilal.hamakarim@edu.bme.hu}.}\,, \hspace{0.2em} 
Junpeng Zhou\thanks{\small Department of Mathematics, Shanghai University, Shanghai 200444, P.R. China. Email:
\small \texttt{junpengzhou@shu.edu.cn}.} \thanks{\small Newtouch Center for Mathematics of Shanghai University, Shanghai 200444, P.R. China.}}

\date{}

\begin{document}

\maketitle

\begin{abstract}
 The $t$-fan is the graph on $2t+1$ vertices consisting of $t$ triangles which intersect at exactly one common vertex. For a given graph $F$, the $r$-expansion $F^r$ of $F$ is the $r$-uniform hypergraph obtained from $F$ by adding $r-2$ distinct new vertices to each edge of $F$. We determine the Tur\'an number of the 3-expansion of the $t$-fan for sufficiently large $n$.
\end{abstract}

{\noindent{\bf Keywords}: Tur\'{a}n number, hypergraph, expansion}

{\noindent{\bf AMS subject classifications:} 05C35, 05C65}

\section{Introduction}

Let $\mathcal{F}$ be an $r$-uniform hypergraph ($r$-graph for short). A hypergraph $\cH$ is \textit{$\mathcal{F}$-free} if $\cH$ does not contain $\mathcal{F}$ as a subhypergraph. The \textit{Tur\'{a}n number} of $\mathcal{F}$ is the maximum number of hyperedges in an $n$-vertex $\mathcal{F}$-free $r$-graph. We denote this maximum by ${\rm{ex}}_r(n,\mathcal{F})$. For $r=2$, we use $\mathrm{ex}(n,\mathcal{F})$ instead of $\mathrm{ex}_2(n,\mathcal{F})$, which represents a fundamental and well-studied problem in graph theory.
A classical result in extremal graph theory is Mantel's theorem \cite{Ma}, which states that the maximum number of edges in a triangle-free graph on $n$ vertices is $\lfloor\frac{n^2}{4}\rfloor$. Tur\'{a}n \cite{Tu} extended this result by determining the Tur\'{a}n number of complete graphs. 
The Erd\H{o}s-Stone-Simonovits theorem \cite{ESi,ESt} gives the asymptotic result for any $r$-chromatic graphs $F$. More precisely, $\text{ex}(n,F)=\big(\frac{r-2}{r-1}\big) \frac{n^2}{2}+o(n^2)$. 

A \textit{$t$-fan}, denoted by $F_t$, is the graph on $2t+1$ vertices consisting of $t$ triangles which intersect at exactly one common vertex. We call the common vertex the \textit{center}, the edges incident to the center \textit{close edges}, while the other edges are \textit{far edges}. Erd\H{o}s, F\"{u}redi, Gould and Gunderson \cite{EFGG} determined the Tur\'{a}n number and extremal graphs for the $t$-fan.

Hypergraph Tur\'{a}n problems are notoriously more difficult than graph versions. For example, the Tur\'{a}n number of $\cK_4^{(3)}$, the complete 3-graph on 4 vertices, is still unknown. 
For a given graph $F$, the \textit{$r$-expansion} $F^r$ of $F$ is the $r$-graph obtained from $F$ by adding $r-2$ distinct new vertices to each edge of $F$, such that the $(r-2)|E(F)|$ new vertices are distinct from each other and are not in $V(F)$. Then the copy of $F$ is a \textit{core} of this $F^r$. 
Expansions were introduced by Mubayi \cite{Mu}, see \cite{MuVe} for a survey. In particular, the Tur\'{a}n number of $K_3^r$ was first asked by Erd\H{o}s \cite{Er}, who conjectured ${\rm ex}_r(n,K_3^r)=\binom{n-1}{r-1}$ if $n\geq 3r/2$. Later, Mubayi and Verstraete \cite{MuVe2} proved the conjecture.

Motivated by the results of Erd\H{o}s, F\"{u}redi, Gould and Gunderson \cite{EFGG} and the challenges in determining hypergraph Tur\'{a}n numbers, the goal of this paper is to determine the Tur\'{a}n number for the 3-expansion of the $t$-fan. Our main result is as follows. 

\begin{thm}\label{thmnew1}
For any $t>1$, if $n$ is sufficiently large, then
  $\ex_3(n,F_t^3)=\binom{n}{3}-\binom{n-t}{3}$.  
\end{thm}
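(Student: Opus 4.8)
For the lower bound, fix a set $S\subseteq[n]$ with $|S|=t$ and let $\cH_0$ be the family of all triples meeting $S$, so that $|\cH_0|=\binom n3-\binom{n-t}3$. The reason $\cH_0$ is $F_t^3$-free is that the covering number of $F_t^3$ equals $t+1$: suppose a copy of $F_t^3$ lay inside $\cH_0$, with center $c$, core triangles $\{c,a_i,b_i\}$, and edges $\{c,a_i,x_i\}$, $\{c,b_i,y_i\}$, $\{a_i,b_i,z_i\}$ on the $5t+1$ distinct vertices. The $t$ far edges $\{a_i,b_i,z_i\}$ are pairwise disjoint and each meets $S$, so $S$ contains exactly one vertex of each far edge and no vertex among $c,x_1,y_1,\dots,x_t,y_t$. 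But then the close edge $\{c,a_i,x_i\}$ can meet $S$ only in $a_i$, and $\{c,b_i,y_i\}$ only in $b_i$, forcing all $2t$ vertices $a_i,b_i$ into $S$, contradicting $|S|=t$. Hence $\ex_3(n,F_t^3)\ge\binom n3-\binom{n-t}3$.

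For the upper bound, let $\cH$ be $F_t^3$-free. The guiding idea is that, up to choosing the $3t$ expansion vertices, building $F_t^3$ amounts to finding $t$ disjoint ``far'' triples sharing a common center through coverable pairs; the only genuine constraint is that the expansion vertices be fresh, which is automatic once the relevant codegrees are large. Formalizing this with threshold $C=6t$: for a vertex $c$ set $N^*(c)=\{v:d_\cH(c,v)\ge C\}$, and note that if $\cH[N^*(c)]$ contains a matching of size $t$, then taking these triples as far edges with center $c$ and greedily selecting fresh close-edge witnesses (possible since all relevant codegrees exceed $5t$) produces a copy of $F_t^3$. Thus $F_t^3$-freeness forces $\nu(\cH[N^*(c)])\le t-1$ for every $c$; likewise, if the graph of heavy pairs (those of codegree $\ge C$) contained a $t$-fan, its heavy edges would expand, so that graph is $F_t$-free and the Erd\H os--F\"uredi--Gould--Gunderson theorem bounds it by $\tfrac{n^2}{4}+O(t^2)$ edges. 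The target of the argument is to upgrade these local constraints to the global statement that the covering number satisfies $\tau(\cH)\le t$, i.e.\ every edge meets a common $t$-set, which immediately gives $e(\cH)\le\binom n3-\binom{n-t}3$.

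The main obstacle is that the shadow carries essentially no information: in the extremal example every pair has positive codegree, so the shadow is complete and the entire obstruction to $F_t^3$ is the collision of expansion vertices, a phenomenon visible only in the fine codegree (witness) data. Moreover the light pairs, of codegree roughly $t$, already contribute a constant fraction $\tfrac t6 n^2$ of all edges, so one cannot clean $\cH$ to high codegree without destroying the extremal structure, and the crude heavy/light estimates overshoot the true bound by a constant factor. The plan is therefore stability followed by exact refinement: first combine $\nu(\cH[N^*(c)])\le t-1$ with the Erd\H os matching theorem and a codegree analysis to show that at most $t$ vertices carry heavy neighborhoods, take $S$ to be these dominant vertices, and deduce that all but $o(n^2)$ edges meet $S$; then promote this to $\tau(\cH)\le t$ by a local-move argument that reroutes or deletes the few edges avoiding $S$, extracting a copy of $F_t^3$ whenever such an edge coexists with the heavy structure over $S$. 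The hardest point is the precise control of the witness sets of the light pairs---exactly the information governed by the delta-system/kernel method---since it is their codegrees being at most $t$ in the optimal configuration that distinguishes $\binom n3-\binom{n-t}3$ from the weaker matching and fan estimates.
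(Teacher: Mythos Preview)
Your lower bound is correct and matches the paper's construction and argument.

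Your upper bound, however, is not a proof but a plan. You correctly observe that (i) a matching of size $t$ in $\cH[N^*(c)]$ yields an $F_t^3$, so $\nu(\cH[N^*(c)])\le t-1$ for every $c$, and (ii) the graph of $C$-heavy pairs is $F_t$-free. But from that point on you only announce intentions: ``stability followed by exact refinement'', ``promote this to $\tau(\cH)\le t$ by a local-move argument'', and you explicitly flag ``the hardest point'' (control of the light pairs' witness sets) as unresolved. None of these steps is carried out. Worse, your two observations do not even deliver the asymptotic $|\cH|\le t\binom{n}{2}+O(n)$: the $F_t$-freeness of the heavy-pair graph bounds the \emph{number} of heavy pairs by $n^2/4+O(1)$ but says nothing about how many hyperedges sit on them, and the matching bound inside each $N^*(c)$ is a local statement that does not aggregate into a global edge count without further work that you do not supply. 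The claim that ``at most $t$ vertices carry heavy neighborhoods'' is asserted, not derived.

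The paper's proof shows how much is hiding behind your outline, and it does not proceed via $N^*(c)$ or the heavy-pair graph. It classifies hyperedges by the codegree profile of their three sub-pairs (sets $E_1,E_2,E_3$ of $it$-light pairs and hyperedge classes $\cH_1,\dots,\cH_4,\cH'$), obtains the asymptotic via a double-count (Claim~\ref{firstcla}) together with a ``nice star'' lemma (Lemma~\ref{nice}) forcing $\cH'$ to be $S_q^3$-free, and then spends four further claims to show $|\cH_3|,|\cH_2|=O(n)$, to pin down a common $t$-set $U_1$ that is the exact link set of almost all $t$-light pairs, and finally to exclude any hyperedge avoiding $U_1$. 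That last step, which you summarize as a ``local-move argument'', occupies roughly a page and requires three separate ad hoc constructions of $F_t^3$ (with core triangles that are not hyperedges of $\cH$), an orientation argument on $E_2$, a coloring argument with parity cases $t$ even/odd, and an Erd\H os--Gallai path decomposition inside a residual set; it is not a routine cleanup. As written, your proposal identifies a reasonable high-level strategy but leaves essentially all of the upper-bound work undone.
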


\section{Preliminaries}

Given an $r$-graph $\cH$, for a set $S$ of vertices its \textit{link hypergraph} in $\cH$ is the $(r-|S|)$-graph obtained by taking the hyperedges containing $S$ and removing $S$ from them. In the 3-uniform case, vertices have \textit{link graphs} and edges (pairs of vertices) have \textit{link sets}. 
For an $r$-graph $\cH$ and an edge $e$ inside the hyperedges of $\cH$, we say that $e$ is \textit{$q$-heavy} in $\cH$ if there are at least $q$ hyperedges of $\cH$ that contain $e$. Otherwise, we call it \textit{$(q-1)$-light}. 

Note that the link hypergraph and heavyness depends on $\cH$, and in the proof we will consider several hypergraphs (subhypergraphs of $\cH$). If we talk about a link hypergraph or a $q$-heavy/light edge $uv$ without specifying a hypergraph, we always mean the link hypergraph in $\cH$ and that $uv$ is $q$-heavy/light in $\cH$. 

A \textit{partial $r$-expansion} of a graph $F$ is obtained by taking a copy of $F$ and a subset of its edges, and adding $r-2$ distinct new vertices to those edges.

\begin{lemma}\label{greedy}
    Assume that $\cH$ contains a partial $3$-expansion of $F$ and for each edge $e$ of that copy of $F$, if $e$ was not enlarged by a new vertex, then $e$ is $(|E(F)|+|V(F)|-2)$-heavy. Then $\cH$ contains a copy of $F^3$.
\end{lemma}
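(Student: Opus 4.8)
The plan is to augment the given partial expansion to a full copy of $F^3$ by choosing, greedily, a fresh expansion vertex for each edge of the core copy of $F$ that has not yet been enlarged. Write $m = |E(F)|$ and $p = |V(F)|$, fix the core copy of $F$ on a vertex set $V_0$ with $|V_0| = p$, and let $e_1, \dots, e_k$ enumerate the edges of this copy that were \emph{not} enlarged. Every enlarged edge already carries its own expansion vertex, and there are exactly $m - k$ of these. I process $e_1, \dots, e_k$ one at a time, maintaining the invariant that all vertices used so far (the $p$ core vertices together with every expansion vertex chosen up to now) are pairwise distinct; at step $i$ I must select for $e_i = u_iv_i$ a vertex $w_i$ with $\{u_i, v_i, w_i\} \in \cH$ that is distinct from all vertices used so far.

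First I would observe that, since $e_i$ was not enlarged, the hypothesis guarantees $e_i$ is $(m + p - 2)$-heavy, so at least $m + p - 2$ hyperedges of $\cH$ contain $e_i$. Each has the form $\{u_i, v_i, w\}$ and is determined by its third vertex, so these hyperedges supply at least $m + p - 2$ distinct candidate vertices $w$, all automatically different from $u_i$ and $v_i$.

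Next I would count the vertices that $w_i$ must avoid: the $p - 2$ core vertices other than $u_i, v_i$, the $m - k$ expansion vertices of the enlarged edges, and the $i - 1$ expansion vertices $w_1, \dots, w_{i-1}$ already committed. Hence the forbidden set has size at most $(p - 2) + (m - k) + (i - 1) = p + m - k + i - 3$, which for $1 \le i \le k$ is at most $p + m - 3 < m + p - 2$. Since the candidate set always strictly outnumbers the forbidden set, a valid $w_i$ exists and the invariant is preserved. Carrying this out for all $k$ steps produces distinct expansion vertices for every edge of the core, which together with the core copy of $F$ form a copy of $F^3$ in $\cH$.

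I expect the only delicate point to be the bookkeeping in the counting step: one must simultaneously account for the core vertices, the already-present expansion vertices of the enlarged edges, and the expansion vertices committed earlier in the greedy process, and verify that the heaviness threshold $m + p - 2$ stays exactly one ahead of this total at every step. The threshold is calibrated precisely for this, so there is no slack to spare; everything else is a routine greedy selection.
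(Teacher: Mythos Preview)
Your proof is correct and follows essentially the same greedy argument as the paper: process the non-enlarged edges one at a time, and at each step use the $(|E(F)|+|V(F)|-2)$-heaviness to find a fresh third vertex avoiding all previously used vertices. Your bookkeeping is simply a more explicit version of the paper's terse count that ``before completing the last hyperedge, we have used at most $|V(F)|+|E(F)|-1$ vertices, and two of those are in $e$.''
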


Note that $(|E(F)|+|V(F)|)=|V(F^3)|$.

\begin{proof}[\bf Proof]
    We consider the copy of $F$ partially expanded and go through its edges that were not yet enlarged. For each such edge $e$, we pick a vertex from its link set that we have not used before (i.e., that is neither in the copy of $F$, nor was added to any of the edges of this copy), to create a hyperedge with $e$. Before completing the last hyperedge, we have used at most $|V(F)|+|E(F)|-1$ vertices, and two of those are in $e$, thus we can find an unused vertex that extends $e$ to a hyperedge.
\end{proof}

We will often consider copies of $S_t^3$. It is formed by $t$ hyperedges sharing a single vertex. The edges contained in those hyperedges form an $F_t$. We will use the expressions center, close edge, and far edge as in the case of $F_t$, and we will often call $S_t^3$ a star instead of the expansion of a star..
We will use a theorem of Duke and Erd\H os \cite{duer} that states $\ex_r(n,S_t^r)=\Theta(n^{r-2})$, in particular $\ex_3(n,S_t^3)=O(n)$. The exact value of $\ex_3(n,S_t^3)$ was determined in \cite{ChuFra}.

Let us assume that we are given a copy $S$ of $S_r^3$ inside a hypergraph $\cH$. There are $3r$ edges inside the hyperedges of $S$, and assume that for each such edge $e$ we are given an integer $w(e)$. We say that $S$ is \textit{nice} if for each edge $e$, 
its link set has at least $w(e)$ vertices outside $S$.

\begin{lemma}\label{nice}
    Assume that $q$ is sufficiently large with respect to $k$ and $r$, and we have a copy of $S_{q}^3$ in a hypergraph $\cH$ such that each edge $e$ inside the hyperedges of this $S_{q}^3$, is contained in at least $w(e)+1$ hyperedges, and $w(e)\le k$. Then there is a nice $S_r^3$ inside the copy of $S_{q}^3$.
\end{lemma}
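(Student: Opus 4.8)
The plan is to pass from the large star to a sub-star by discarding the hyperedges that create \emph{conflicts}, controlling the number of conflicts via the bound $w(e)\le k$. Write the given copy of $S_q^3$ with center $c$ and hyperedges $H_i=\{c,a_i,b_i\}$ for $i\in[q]$; since the $H_i$ meet only in $c$, the $2q+1$ vertices $c,a_1,b_1,\dots,a_q,b_q$ are distinct, and the $3q$ edges inside the hyperedges are the close edges $ca_i,cb_i$ and the far edges $a_ib_i$. For each such edge $e$ let $L(e)$ be its link set in $\cH$, so $|L(e)|\ge w(e)+1$ by hypothesis. Any candidate $S_r^3$ I build will have center $c$ and vertex set $S=\{c\}\cup\{a_j,b_j:j\in I\}$ for some $I\subseteq[q]$ with $|I|=r$, so $|S|=2r+1$.

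The first observation is a dichotomy on the edges. Call $e$ \emph{loose} if $|L(e)|\ge w(e)+2r+1$ and \emph{tight} otherwise. For any $S$ as above and any loose $e$ we have $|L(e)\setminus S|\ge |L(e)|-|S|\ge w(e)$, so loose edges meet the niceness requirement automatically, regardless of which sub-star we pick. A tight edge, on the other hand, has $|L(e)|\le w(e)+2r\le k+2r$, so its link set is \emph{small}, bounded in terms of $k$ and $r$. Moreover, for every edge $e\subseteq H_i$ the third vertex of $H_i$ lies in $L(e)$ (it completes $e$ to the hyperedge $H_i$) and always lies in $S$; conversely the only vertices of $S$ that can lie in $L(e)$ are this one forced vertex together with the external star vertices $\{a_j,b_j:j\in I\setminus\{i\}\}$ contained in $L(e)$. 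Hence for $i\in I$ the edge $e\subseteq H_i$ is nice provided $L(e)$ meets no $\{a_j,b_j\}$ with $j\in I\setminus\{i\}$, since then $|L(e)\cap S|=1\le |L(e)|-w(e)$.

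This suggests the auxiliary graph $G$ on vertex set $[q]$ in which I join $i$ and $j$ whenever some tight edge of $H_i$ has a link vertex in $\{a_j,b_j\}$ or some tight edge of $H_j$ has a link vertex in $\{a_i,b_i\}$. By the two observations above, if $I$ is an independent set of $G$ then the sub-star with center $c$ on $\{a_j,b_j:j\in I\}$ is nice: its tight edges avoid all external star vertices and its loose edges are automatically fine. It remains to find an independent set of size $r$. Orient each conflict at the hyperedge producing it: from $i$ emit an arc to each $j\neq i$ such that a tight edge of $H_i$ reaches $\{a_j,b_j\}$. Since $H_i$ has three edges and each tight link set has at most $k+2r$ vertices, every out-degree is at most $3(k+2r)$, so $G$ has at most $3(k+2r)q$ edges and average degree at most $6(k+2r)$. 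By the standard Tur\'an/Caro--Wei bound the independence number satisfies $\alpha(G)\ge q/\big(6(k+2r)+1\big)$, which exceeds $r$ once $q$ is large enough in terms of $k$ and $r$; any $r$ vertices of such an independent set give the desired nice $S_r^3$.

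The step I expect to be the main obstacle is bounding the conflict graph. It is tempting to bound the degree of each vertex $i$ directly, but while the number of hyperedges whose star vertices are \emph{reached by} $H_i$ is controlled by the small tight link sets, the number of hyperedges that \emph{reach into} $\{a_i,b_i\}$ (the in-degree) need not be bounded at all. The device that rescues the argument is to bound only the out-degrees and then pass to the total number of edges: every conflict is emitted by some vertex of bounded out-degree, so the \emph{average} degree of $G$ is bounded, and a lower bound on the independence number in terms of the average (rather than the maximum) degree is exactly what is needed.
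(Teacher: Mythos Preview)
Your proof is correct and follows essentially the same route as the paper: build a conflict graph on the $q$ hyperedges of the star, bound the out-degree by a constant depending on $k$ and $r$, and apply Caro--Wei to extract an independent set of size $r$, which yields the nice $S_r^3$. The only cosmetic difference is that the paper, instead of splitting into loose and tight edges, simply \emph{fixes} $w(e)$ link vertices for each edge $e$ (possible since $e$ lies in $\ge w(e)+1$ hyperedges and exactly one of them is in the star) and declares a conflict only when a fixed vertex lands in another hyperedge; this gives out-degree $\le 3k$ directly and avoids the dichotomy, but the argument is otherwise identical.
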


\begin{proof}[\bf Proof]
    First, for each edge $e$ we fix $w(e)$ hyperedges containing $e$ that do not belong to the $S_{q}^3$. In other words, we fix $w(e)$ vertices from the link set of $e$.
    We define an auxiliary directed graph $G$. The vertices of $G$ are the hyperedges of $S_{q}^3$. For two vertices $x$ and $y$ of $G$, there is an edge from $x$ to $y$, if for some edge inside $x$ (as a hyperedge of $\cH$) we fixed a vertex that belongs to $y$, or the other way around. Then for each of the $q$ vertices of $G$, this defines at most $3k$ edges, thus there are at most $3kq$ edges in $G$. Now we forget the direction of the edges and look at the underlying undirected graph $G'$. This graph has at most $3kq$ edges. 
    By the Caro-Wei theorem \cite{caro,wei}, we have $\alpha(G')\geq\frac{n}{1+\bar{d}}$, with $\alpha(G')$ being the independence number and $\bar{d}$ being the average degree of $G'$. Therefore, there is an independent set of size $\frac{q}{1+6k}>r$ in $G'$.

An independent set of size $r$ in $G'$ corresponds to a copy of $S_r^3$ inside the copy of $S_{q}^3$. It is a nice copy, since for each edge, the $w(e)$ fixed vertices are not in this copy, otherwise there would be an edge inside $G'$ between the hyperedge that contains $e$ and the other hyperedge that contains the fixed vertex.
\end{proof}

Given a nice $S_t^3$ in a hypergraph $\cH$, we define the \textit{corresponding auxiliary bipartite graph}. Part $A$ consists of the $(5t-1)$-light edges inside the hyperedges of the nice $S_t^3$, and part $B$ consists of the vertices outside the $S_t^3$. A vertex in $A$ is joined to a vertex in $B$ if the corresponding edge and vertex together form a hyperedge of $\cH$. Observe that a matching covering $A$ corresponds to a partial copy of $F_t^3$ in $\cH$, that can be extended to a copy of $F_t^3$ by Lemma \ref{greedy}, since $|V(F_t^3)|=5t+1$ and the remaining edges inside the hyperedges of the nice $S_t^3$ are $5t$-heavy. 

We will use the following simple observation multiple times.

\begin{proposition}\label{hall}
    Let $G$ be a bipartite graph with parts $A$ and $B$ that has no matching of size $k$. If $|A|\ge k$ and each vertex of $A$ has degree at least $k-1$, then each vertex of $A$ is adjacent to the same set of $k-1$ vertices of $B$.
\end{proposition}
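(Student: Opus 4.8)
The plan is to translate the hypotheses into the language of K\"{o}nig's theorem. Since $G$ has no matching of size $k$, its maximum matching has size at most $k-1$, so by K\"{o}nig's theorem its minimum vertex cover $C$ also has size at most $k-1$. I would write $C = C_A \cup C_B$ as a partition into $C_A = C \cap A$ and $C_B = C \cap B$, so that $\abs{C_A} + \abs{C_B} \le k-1$.

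The first key step is to produce a vertex of $A$ lying outside the cover, and this is exactly where the hypothesis $\abs{A} \ge k$ enters: since $\abs{C_A} \le \abs{C} \le k-1 < k \le \abs{A}$, the set $A \setminus C_A$ is nonempty, so I fix some $a$ in it. Because $C$ is a vertex cover and $a \notin C$, every edge at $a$ must be covered at its other endpoint, whence $N(a) \subseteq C_B$. Combining this with $\deg(a) \ge k-1$ gives the chain $k-1 \le \abs{N(a)} \le \abs{C_B} \le \abs{C} \le k-1$, forcing every inequality to be an equality. In particular $\abs{C_B} = k-1$, $C_A = \emptyset$, and $N(a) = C_B$.

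The final step is to propagate this to all of $A$. Since $C_A = \emptyset$, every vertex $a'$ of $A$ lies outside the cover, so as above $N(a') \subseteq C_B$; together with $\deg(a') \ge k-1 = \abs{C_B}$ this forces $N(a') = C_B$. Hence every vertex of $A$ is adjacent to exactly the same set $C_B$ of $k-1$ vertices of $B$, which is the desired conclusion.

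I do not expect a genuine obstacle here, as the statement is a short deduction once one passes to a minimum vertex cover via K\"{o}nig's theorem. The only point requiring care is the bookkeeping that forces the $A$-part of the minimum cover to be empty; the hypothesis $\abs{A} \ge k$ is essential for this, since otherwise there need not be any starting vertex in $A \setminus C_A$. Indeed, if $\abs{A} = k-1$ the condition ``no matching of size $k$'' holds automatically, and the vertices of $A$ may have completely distinct neighborhoods, so the conclusion genuinely fails without this assumption.
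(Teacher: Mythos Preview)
Your proof is correct. The route, however, differs from the paper's. The paper applies Hall's marriage theorem to an arbitrary $k$-subset $A'\subseteq A$: since there is no matching saturating $A'$, some nonempty $A''\subseteq A'$ has $|N(A'')|<|A''|$; the minimum-degree hypothesis forces $|A''|\ge k$, hence $A''=A'$ and $N(A')$ is a fixed $(k-1)$-set equal to the neighborhood of every vertex in $A'$. Finally one glues together the conclusions over all $k$-subsets of $A$.

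Your argument instead invokes K\"onig's theorem once on the whole graph: a minimum vertex cover $C$ has size at most $k-1$, the assumption $|A|\ge k$ yields a vertex of $A$ outside $C$, and the degree bound then pins down $C\subseteq B$ with $|C|=k-1$, after which every vertex of $A$ is forced to have neighborhood exactly $C$. This is a genuinely different (though closely related) tool, and it is a bit more economical: you get the global conclusion in one shot, without restricting to $k$-subsets and then patching. The paper's Hall-based argument, on the other hand, stays closer to the matching-theoretic framing that is used when the proposition is applied later (to the ``corresponding auxiliary bipartite graph''), where one is explicitly looking for a matching saturating the $A$-side.
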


\begin{proof}
    There is no matching covering any $k$-set $A'\subset A$, thus by Hall's condition, there is a set $A''$ in $A$ with fewer than $|A''|$ vertices in their neighborhood. Then $A''$ is non-empty, thus there are at least $k-1$ vertices in their neighborhood, hence $|A''|\ge k$. Therefore, $A''=A'$ and each vertex of $A$ is adjacent to the same set of $k-1$ vertices of $B$. This holds for each $k$-subset of $A$, thus must hold for the whole set $A$.
\end{proof}

\section{Proof of Theorem \ref{thmnew1}}

Now we are ready to prove Theorem \ref{thmnew1}. Recall that it states that $\ex_3(n,F_t^3)=\binom{n}{3}-\binom{n-t}{3}$ for $n$ sufficiently large and $t>1$. 

\begin{proof}[\bf Proof of Theorem \ref{thmnew1}]
For the lower bound, we consider the following hypergraph. We take a set $U$ of $t$ vertices, add $n-t$ vertices and each $3$-set containing at least one vertex in $U$ as a hyperedge. 
Note that $F_t^3$ contains a matching of size $t$, i.e., $t$ independent hyperedges (corresponding to the far edges). and a vertex $u$ (the center) outside the matching such that the following holds. Any hyperedge in the matching contains two vertices $v$, $w$ (the vertices in the far edges in the core $t$-fan) such that $u,v$ and $u,w$ (the close edges) are contained in hyperedges with the third vertex not in the matching. If our construction contains $F_t^3$, then in the matching of size $t$ each hyperedge contains exactly one vertex of $U$, thus $u\not\in U$ and one of $v$ and $w$, say $v$ is not in $U$ either. But then the third vertex of the hyperedge containing $u,v$ has to be in $U$, thus in the matching, a contradiction.

For the upper bound, we consider an $n$-vertex $F_t^3$-free hypergraph $\cH$.
    Let $E_1$ denote the set of edges that are $t$-light, i.e., are contained in at most $t$ hyperedges, $E_2$ the set of $(t+1)$-heavy, $2t$-light edges and $E_3$ the set of $(2t+1)$-heavy, $3t$-light edges. Let  $\cH_i$ denote the set of hyperedges that contain at least $i$ edges of $E_i$, 
    and let $\cH_4$ denote the set of hyperedges that contain an edge of $E_2$ and two edges of $E_3$.
    Let $e_i=|E_i|$ and $h_i=|\cH_i|$. Let $\cH'$ denote the hypergraph consisting of the rest of the hyperedges. Note that 
    if the edges in a hyperedge of $\cH'$ are contained in $a,b,c$ hyperedges respectively, and $a\le b\le c$, then $a\ge t+1$, $b\ge 2t+1$ and $c\ge 3t+1$.

    \begin{clm}\label{firstcla}
        $h_1+h_2+h_3+h_4\le t(e_1+e_2+e_3)$.
    \end{clm}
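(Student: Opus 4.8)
The plan is to bound each $h_i$ by a double-counting (incidence) argument between hyperedges and the light edges they contain, where by an \emph{incidence} into $E_j$ I mean a pair $(H,e)$ with $H$ a hyperedge and $e\in E_j$ an edge contained in $H$. The whole point will be to set up these counts so that the $\cH_4$ contribution is absorbed into the slack present in the $E_2$- and $E_3$-counts, rather than treated separately. Throughout, write $d(e)$ for the number of hyperedges of $\cH$ containing $e$, so that $e\in E_1$ forces $d(e)\le t$, $e\in E_2$ forces $d(e)\le 2t$, and $e\in E_3$ forces $d(e)\le 3t$.

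First I would dispose of $h_1$ directly. Every hyperedge of $\cH_1$ contains at least one edge of $E_1$, so the number of incidences into $E_1$ is at least $h_1$; on the other hand it is at most $\sum_{e\in E_1} d(e)\le t e_1$. Hence $h_1\le t e_1$.

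Next I would run the analogous counts for $E_2$ and $E_3$, but now tracking $\cH_4$ as well. The structural observation driving everything is that $\cH_4$ is disjoint from both $\cH_2$ and $\cH_3$: a hyperedge of $\cH_4$ has exactly one edge in $E_2$, so it cannot meet the ``at least two $E_2$-edges'' requirement of $\cH_2$, and it has exactly two edges in $E_3$, so it cannot meet the ``all three in $E_3$'' requirement of $\cH_3$. Counting incidences into $E_2$: each hyperedge of $\cH_2$ contributes at least $2$ and each hyperedge of $\cH_4$ contributes $1$, and because $\cH_2\cap\cH_4=\varnothing$ these are counted without overlap, so $2h_2+h_4\le \sum_{e\in E_2}d(e)\le 2t e_2$. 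Counting incidences into $E_3$: each hyperedge of $\cH_3$ contributes $3$ and each hyperedge of $\cH_4$ contributes $2$, and $\cH_3\cap\cH_4=\varnothing$, so $3h_3+2h_4\le \sum_{e\in E_3}d(e)\le 3t e_3$.

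Finally I would combine. Rewriting the three bounds gives $h_1\le t e_1$, $h_2+\tfrac12 h_4\le t e_2$, and $h_3+\tfrac23 h_4\le t e_3$; summing them yields $h_1+h_2+h_3+\tfrac76 h_4\le t(e_1+e_2+e_3)$, and since $\tfrac76\ge 1$ the claim follows at once. The step I expect to be the real (conceptual rather than computational) obstacle is exactly this absorption: the naive per-family bounds $h_i\le t e_i$ already spend the entire budget $t(e_1+e_2+e_3)$ on $h_1+h_2+h_3$, leaving nothing for $h_4$. The argument succeeds only because the multiplicity-$2$ and multiplicity-$3$ incidence counts into $E_2$ and $E_3$ carry enough slack — encoded by the coefficients $\tfrac12$ and $\tfrac23$, whose sum exceeds $1$ — to pay for $h_4$, and that slack is available precisely because $\cH_4$ is disjoint from $\cH_2$ and from $\cH_3$. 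So the care must go into verifying that disjointness and the exact per-hyperedge contributions of $\cH_4$ to each incidence count.
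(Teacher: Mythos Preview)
Your proof is correct and follows essentially the same approach as the paper's: the paper likewise derives $h_1\le te_1$, $2h_2+h_4\le 2te_2$, and $3h_3+2h_4\le 3te_3$ by the same incidence counts, then sums to obtain $h_1+h_2+h_3+\tfrac{7}{6}h_4\le t(e_1+e_2+e_3)$. Your explicit verification that $\cH_4$ is disjoint from $\cH_2$ and $\cH_3$ makes the argument more self-contained than the paper's terse version, but the method is identical.
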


    \begin{proof}[\bf Proof of Claim] 
        We count the hyperedges by taking an edge of $E_i$ first, $e_i$ ways, and extending them to hyperedges at most $it$ ways. This way we may count a hyperedge multiple times. Then for $i=1$ we obtain $h_1\le te_1$. For $i=2$, we obtain $2h_2+h_4\le 2te_2$, and for $i=3$ we obtain $3h_3+2h_4\le 3te_3$. The above inequalities imply $h_1+h_2+h_3+7h_4/6\le t(e_1+e_2+e_3)$.
    \end{proof}

    Let us return to the proof of the theorem.

    \begin{clm}\label{clami}
        $|\cH'|=O(n)$.
    \end{clm}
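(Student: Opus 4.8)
The plan is to argue by contradiction: if $|\cH'|$ were superlinear in $n$, I would locate a copy of $F_t^3$ inside $\cH$, contradicting that $\cH$ is $F_t^3$-free. The engine is the Duke--Erd\H os bound $\ex_3(n,S_q^3)=O(n)$, valid for every fixed $q$. I would fix $q=q(t)$ large enough for Lemma~\ref{nice} to apply with $r=t$ and $k=3t$. If $|\cH'|>\ex_3(n,S_q^3)$, then $\cH'$ must contain a copy of $S_q^3$, so it suffices to derive a contradiction from the existence of such a star; this will force $|\cH'|\le\ex_3(n,S_q^3)=O(n)$.

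So suppose $\cH'$ contains a copy of $S_q^3$. All of its hyperedges lie in $\cH'$, so by the observation recorded just before Claim~\ref{firstcla} the three edges inside each such hyperedge have, when sorted, multiplicities at least $t+1$, $2t+1$ and $3t+1$ in $\cH$. I would assign to each edge $e$ the weight $w(e)\in\{t,2t,3t\}$ according to its rank within its own hyperedge, so that $e$ is contained in at least $w(e)+1$ hyperedges and $w(e)\le 3t=k$. Lemma~\ref{nice} then produces a nice copy of $S_t^3$ in which every edge $e$ has at least $w(e)$ vertices of its link set lying outside the star.

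Finally I would expand the $3t$ edges of the core $F_t$ of this nice $S_t^3$ one at a time, in increasing order of weight, each time choosing for the current edge an outside link vertex not used before. The weights are staggered exactly right: the $t$ edges of weight $t$ are processed first, when fewer than $t$ vertices have been used and each such edge has at least $t$ candidates; then the $t$ edges of weight $2t$, when fewer than $2t$ vertices have been used and each has at least $2t$ candidates; and finally the $t$ edges of weight $3t$, when fewer than $3t$ vertices have been used and each has at least $3t$ candidates. At every stage a free vertex exists, so the process yields $3t$ distinct new vertices, all outside the core, and hence a copy of $F_t^3$ in $\cH$. Equivalently, in the language already set up, one matches the $(5t-1)$-light edges in the associated auxiliary bipartite graph by this same staggered greedy argument and then invokes Lemma~\ref{greedy} to expand the remaining $5t$-heavy edges.

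The one delicate point is the bookkeeping in this last step: I must verify that processing the edges from lightest to heaviest keeps the number of already-used vertices strictly below the number of available outside link vertices at each stage. This is exactly where the three-tier hierarchy $t+1<2t+1<3t+1$ guaranteed for the hyperedges of $\cH'$ is used in full, and it explains why the thresholds $t$, $2t$, $3t$ were chosen in the definitions of $E_1,E_2,E_3$.
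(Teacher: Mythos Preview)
Your proof is correct and follows essentially the same route as the paper: use Duke--Erd\H os to reduce to finding an $S_q^3$ in $\cH'$, apply Lemma~\ref{nice} to extract a nice $S_t^3$, and then greedily expand the $3t$ core edges in order of increasing weight to produce $F_t^3$. Your choice to cap $w(e)$ at $\{t,2t,3t\}$ according to the rank of $e$ within its hyperedge is in fact slightly cleaner than the paper's version (which sets $w(e)+1$ equal to the actual multiplicity and so does not literally satisfy the hypothesis $w(e)\le k$ of Lemma~\ref{nice}); the one loose thread is your ``equivalently'' aside about the auxiliary bipartite graph and Lemma~\ref{greedy}, which does not quite match the argument since here none of the edges need be $5t$-heavy, but your primary argument stands on its own.
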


    \begin{proof}[\bf Proof of Claim] Let $q$ be sufficiently large.
By the theorem of Duke and Erd\H os we have that $\ex_3(n,S_q^3)=O(n)$, thus we are done if $\cH'$ is $S_q^3$-free.
    Assume that there is a copy $S$ of $S_q^3$ in $\cH'$, i.e., $uv_iw_i$ is a hyperedge of $\cH'$ for each $i\le q$. For the edges inside these hyperedges, we let $w(e)+1$ be the number of hyperedges containing $e$. Then by Lemma \ref{nice}, there is a nice copy $S'$ of $S_t^3$ in $\cH'$.

 Without loss of generality, $uv_iw_i$ is a hyperedge of $S'$ for each $i\le t$ such that for each edge $e$ inside these hyperedges, there are $w(e)$ vertices in the link graph of $e$ that do not belong to $S'$. Let $a_i\le b_i\le c_i$ denote $w(e)$ of the three edges of $uv_iw_i$. Observe that $a_i\ge t$, $b_i\ge 2t$, and $c_i\ge 3t$. 
 Let $A$ consist of an edge with $a_i$ fixed vertices from $uv_iw_i$, $B$ consist of an edge with $b_i$ fixed vertices from $uv_iw_i$ and $C$ consist of an edge with $c_i$ fixed vertices from $uv_iw_i$. 
 
 Now we go through the edges in $A$ in an arbitrary order and greedily pick a new one of those vertices for each such edge. It is doable because we always have at most
 $t-1$ vertices picked earlier, and we have at least $t$ choices. Then we go through the edges of $B$ and again greedily pick a new one of those vertices for each such edge. This is doable since we always have the $t$ vertices picked for the edges in $A$ and at most $t-1$ vertices picked for the other edges in $B$. After that, we go through the edges of $C$ and pick new vertices for them as well. This is doable, since we always have at most $3t-1$ vertices picked earlier and we have at least $3t$ choices. Thus, we can find a copy of $F_t^3$ in $\cH$, a contradiction. This completes the proof.
    \end{proof}
    Since $e_1+e_2+e_3\le \binom{n}{2}$, combining the above two claims gives the upper bound $|\cH|\le t\binom{n}{2}+O(n)$.

    Assume now that $\cH$ has $\ex_3(n,F_t^3)$ hyperedges. Then in the above calculations, each bound is sharp apart from a $O(n)$ additive term. In particular, we can assume that $h_4\le 6|\cH'|+6t^2n=O(n)$, since otherwise $|\cH|=h_1+h_2+h_3+h_4+|\cH'|=h_1+h_2+h_3+7h_4/6+|\cH'|-h_4/6\le t(e_1+e_2+e_3)+|\cH'|-h_4/6\le t\binom{n}{2}-t^2n$ and we are done. Similarly, we can obtain that $e_1+e_2+e_3=\binom{n}{2}-O(n)$. 
    Moreover, all but $O(n)$ edges of $E_i$ are in exactly $it$ hyperedges of $\cH_i$. Furthermore, there are $O(n)$ hyperedges in $\cH_2$ which contain three edges from $E_1\cup E_2\cup E_3$.

    \begin{clm}
        $|\cH_3|=O(n)$.
    \end{clm}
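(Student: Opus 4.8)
The plan is to show that once $|\cH_3|$ exceeds a suitable constant multiple of $n$ one can build a copy of $F_t^3$, which forces $|\cH_3|=O(n)$. First I would clean up $\cH_3$: discard every hyperedge containing an edge of $E_3$ that lies in fewer than $3t$ hyperedges. By the near-extremal facts already established, only $O(n)$ edges of $E_3$ fail to lie in exactly $3t$ hyperedges, and each such edge lies in at most $3t$ hyperedges, so this removes only $O(n)$ hyperedges and leaves a subhypergraph $\cH_3^{\ast}$ with $|\cH_3^{\ast}|$ still larger than $\ex_3(n,S_q^3)$ (for large $n$), in which every edge of every hyperedge lies in \emph{exactly} $3t$ hyperedges of $\cH$. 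Since $\ex_3(n,S_q^3)=O(n)$ for each fixed $q$ by the theorem of Duke and Erd\H{o}s, $\cH_3^{\ast}$ then contains a copy of $S_q^3$ with $q$ as large as we wish. Applying Lemma \ref{nice} with $k=3t-1$ (each relevant edge $e$ has $w(e)=3t-1$) yields a nice copy of $S_r^3$, where $r$ is a constant chosen large compared with $t$ (say $r\ge 2t$). By niceness, and since each edge lies in exactly $3t$ hyperedges, exactly one of which is inside the star, each of the $3r$ edges of this $S_r^3$ has \emph{exactly} $3t-1$ link vertices outside the star.

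Next I would play the auxiliary bipartite graph against Proposition \ref{hall}. Fix any $t$ of the $r$ triangles, let $A$ be their $3t$ edges and $B$ the vertices outside the star; every vertex of $A$ has degree exactly $3t-1$. If for some choice of $t$ triangles this graph has a matching covering $A$, then expanding each of these $3t$ edges by its matched vertex already gives a copy of $F_t^3$ (every edge is expanded, so Lemma \ref{greedy} is not even needed), contradicting $F_t^3$-freeness. So I may assume that for \emph{every} $t$-subset of triangles no such matching exists; then Proposition \ref{hall} with $k=3t$ (here $|A|=k$ and every degree equals $k-1$) forces the $3t$ outside-link sets of that subset to coincide with a single $(3t-1)$-set. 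Since the outside-link set of a given edge does not depend on which $t$-subset was chosen, and since $r>t$ makes any two triangles lie in a common $t$-subset, all these sets agree: there is one set $D$ with $|D|=3t-1$ that is the outside-link set of \emph{every} edge of the $S_r^3$.

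The main obstacle is to defeat this rigid configuration, and the key idea is to expand a \emph{mixed} core. Writing $u$ for the center and $u v_i w_i$ for the triangles, rigidity says $u v_i d,\ u w_i d,\ v_i w_i d\in\cH$ for all $i\le r$ and all $d\in D$; in particular every edge $u d$ with $d\in D$ lies in the $2r$ hyperedges $\{u v_j d,\ u w_j d: j\le r\}$ and is therefore very heavy. I would then realize $F_t^3$ with center $u$ on the core triangles $u v_i d_i$ $(i\le t)$ for distinct $d_1,\dots,d_t\in D$: the far edge $v_i d_i$ is expanded by $w_i$ (the hyperedge $v_i w_i d_i$ is present), the heavy close edge $u d_i$ is expanded by a star vertex from its link that avoids the core and the already-used $w_1,\dots,w_t$ (at least $2(r-t)$ choices remain), and the close edge $u v_i$ is expanded by a vertex of $D\setminus\{d_1,\dots,d_t\}$, a set of size $2t-1\ge t$. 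For $r\ge 2t$ all the greedy choices go through, and the three kinds of expansion vertices are automatically distinct, since the $w_i$'s and the auxiliary star vertices lie in the star while the chosen vertices of $D$ lie outside it. This produces a copy of $F_t^3$. As both branches contradict $F_t^3$-freeness, we conclude $|\cH_3|=O(n)$.
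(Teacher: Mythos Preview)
Your proof is correct. The setup coincides with the paper's: both clean $\cH_3$ down to hyperedges whose three $E_3$-edges are in exactly $3t$ hyperedges, both invoke Duke--Erd\H os to find a large star, apply Lemma~\ref{nice} to get a nice star, and use Proposition~\ref{hall} on the auxiliary bipartite graph of each $S_t^3$-substar to force a common outside link set $D$ of size $3t-1$. Where you diverge is in the contradiction. The paper argues by counting: for the centre $u$, a vertex $v\in D$, and any leaf $w$ of the nice $S_{2t}^3$, the cleanup forces $uvw\in\cH_3$, hence $uv\in E_3$ is $3t$-light; yet the $4t$ leaves all lie in the link of $uv$, so $uv$ is in at least $4t$ hyperedges, a contradiction. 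You instead build $F_t^3$ explicitly on ``mixed'' core triangles $uv_id_i$ with $d_i\in D$, expanding $v_id_i$ by $w_i$, $uv_i$ by a spare element of $D$, and $ud_i$ by a spare leaf of the star. The paper's route is shorter and exploits the $\cH_3$-membership of the cleaned hyperedges more directly; yours is more constructive and shows concretely how the rigid link structure produces the forbidden configuration. One small point: when you pick the expansion vertices $s_i$ for the edges $ud_i$, you should also avoid the previously chosen $s_1,\dots,s_{i-1}$; the greedy count $2r-2t-(i-1)\ge t+1>0$ for $r\ge 2t$ still works, so this is only a wording issue.
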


    \begin{proof}[\bf Proof of Claim] 
    Let $\cH_3'$ denote the set of hyperedges that contain three edges that are in exactly $3t$ hyperedges of $\cH_3$. Then $|\cH_3|=|\cH_3'|+O(n)$, since we can count the hyperedges of $\cH_3\setminus \cH_3'$ by taking one of the edges of $E_3$ in less than $3t$ hyperedges, $O(n)$ ways, and then taking the hyperedges containing that, at most $3t-1$ ways.    
    We let $w(e)=3t-1$  for the edges of $\cH_3'$. If there is a copy of $S_q^3$ for some sufficiently large $q$ in $\cH_3'$, then by Lemma \ref{nice}, we have a nice copy of $S_{2t}^3$. We claim that for each of the edges inside the hyperedges of this nice star, the same $3t-1$ outside vertices are in their link sets. 
   We consider an  $S_t^3$ subhypergraph and the corresponding auxiliary bipartite graph. Then there is no matching covering $A$ in this graph since there is no $F_t^3$ in $\cH_3'$. By Proposition \ref{hall}, the same $3t-1$ vertices are in the link set of each edge of this $S_t^3$. It holds for any $t$ hyperedges in the $S_{2t}^3$, thus it holds for all the hyperedges. Let $u$ be the center of the $S_{2t}^3$, $v$ be among these $3t-1$ vertices, and $w$ be another vertex of the $S_{2t}^3$. 
   Then $uvw\in \cH_3$, in particular $uv$ is in exactly $3t$ hyperedges. But the same holds for every vertex $w'\neq u$ of the $S_{2t}^3$, thus $uv$ is in at least $4t$ hyperedges of $\cH_3$, a contradiction. Therefore, there is no $S_q^3$ in $\cH_3'$, hence $|\cH_3'|=O(n)$ by the theorem of Duke and Erd\H os, completing the proof of this claim.
    \end{proof}

    \begin{clm}
        $|\cH_2|=O(n)$.
    \end{clm}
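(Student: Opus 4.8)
The plan is to mirror the proof that $|\cH_3|=O(n)$ with the threshold $3t$ replaced by $2t$, the essential new difficulty being that a hyperedge of $\cH_2$ carries only \emph{two} light edges rather than three. First I would pass to a clean subfamily: using the facts already recorded---that all but $O(n)$ edges of $E_2$ lie in exactly $2t$ hyperedges of $\cH_2$, and that only $O(n)$ hyperedges of $\cH_2$ contain three edges of $E_1\cup E_2\cup E_3$---let $\cH_2'$ consist of the hyperedges with exactly two edges of $E_2$, each lying in exactly $2t$ hyperedges (all of which then belong to $\cH_2$), whose third edge is $(3t+1)$-heavy. Then $|\cH_2|=|\cH_2'|+O(n)$, so by the theorem of Duke and Erd\H{o}s it suffices to prove that $\cH_2'$ is $S_q^3$-free for a suitable large $q$. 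Suppose not; applying Lemma \ref{nice} with $w(e)=2t-1$ on the two $E_2$ edges and $w(e)=3t$ on the third edge yields a nice copy of $S_m^3$ in $\cH_2'$ with $m$ large, and by pigeonhole (passing to a still-large nice sub-star) I may assume the two $E_2$ edges occupy the same position in every hyperedge: either both are incident to the center, or one is a close edge and the other the far edge.

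Next I would set up a dichotomy. Fix any $S_t^3$ inside the nice star and consider, as in the auxiliary bipartite graph, its $2t$ light ($E_2$) edges against the outside vertices; each has degree exactly $2t-1$. If these $2t$ edges admit a saturating matching, then, since each third edge has at least $3t$ free neighbors, the matching extends to all $3t$ edges of the $S_t^3$ and produces a copy of $F_t^3$, a contradiction. Hence no such matching exists for any $S_t^3$, and Proposition \ref{hall} with $k=2t$ forces the $2t$ light edges of each $S_t^3$ to share a common set of $2t-1$ outside vertices. Since $m$ is large, any two hyperedges lie in a common $S_t^3$, so in fact all $E_2$ edges of the nice star share a single set $P=\{p_1,\dots,p_{2t-1}\}$.

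The rigid case is the heart of the matter and has no analogue in the $\cH_3$ argument: there all three edges were light, so a common neighborhood immediately overloaded an $E_3$ edge, whereas here the third edge is heavy and carries no constraint. To exploit rigidity I would use the refined fact that every hyperedge through a typical $E_2$ edge lies in $\cH_2$. Each edge from the center to a vertex of $P$ lies in at least $m$ hyperedges and so is far too heavy to belong to $E_2$; feeding the hyperedges $e\cup\{p\}$ (with $e\in E_2$ and $p\in P$) into the $\cH_2$-membership fact therefore forces every edge joining a star vertex to $P$ into $E_2$, creating a dense $E_2$-structure around $P$. I would then argue that this structure either supplies the missing expansion vertices---realizing a copy of $F_t^3$ whose fan triangles are the $ca_ip$ with $a_ip\in E_2$ as far edges---or is so concentrated that double counting over the $m$ star hyperedges places some $E_2$ edge in more than $2t$ hyperedges, contradicting $2t$-lightness. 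Making this last step precise---controlling how $E_2$-membership propagates, and using the largeness of $m$ to guarantee fresh expansion vertices---is where essentially all the work lies; once it is done, the theorem of Duke and Erd\H{o}s gives $|\cH_2|=O(n)$.
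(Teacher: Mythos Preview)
Your reduction to a clean subfamily $\cH_2'$ and the use of Lemma~\ref{nice} and Proposition~\ref{hall} to reach a rigid common neighbourhood $P$ of size $2t-1$ are fine, and indeed the paper carries out essentially the same steps. The difficulty is that this rigid structure is \emph{exactly} one vertex short of giving an $F_t^3$: if the two $E_2$-edges in each star hyperedge $uv_iw_i$ are the close edges, then the link set of $uv_i$ is precisely $P\cup\{w_i\}$ and that of $uw_i$ is $P\cup\{v_i\}$, so extending the $2t$ close edges by distinct new vertices requires $2t$ elements of $P$, one more than exists. Your two suggested escapes do not close this gap. Propagating $E_2$-membership to the edges $v_ip$ gives you no control over their link sets (they are not known to be ``typical'' $E_2$-edges), so you cannot manufacture new extension vertices from them; and no double count forces an $E_2$-edge above $2t$, since the $2t$ hyperedges through each $uv_i$ are already completely accounted for by $P\cup\{w_i\}$. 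In short, the sentence ``this is where essentially all the work lies'' is accurate: the rigid case does not yield a contradiction with $F_t^3$-freeness alone, and something genuinely new is needed.

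The paper's missing idea is not to prove $\cH_2'$ is $S_q^3$-free at all. Instead it introduces an \emph{orientation}: for each $E_2$-edge $uv$ in exactly $2t$ hyperedges of $\cH_2'$, it points $uv$ toward the endpoint that is more often incident to the heavy edge among those $2t$ hyperedges, and calls $u$ the \emph{small} vertex of $uvw$ when $vw$ is the heavy edge. One then shows that for each vertex $u$ only $O(1)$ out-edges lie in a hyperedge where $u$ is small; otherwise a nice $S_{t+1}^3$ of such hyperedges (so the far edges are the heavy ones, forcing your ``case (a)'') produces the common set $P=\{u_1,\dots,u_{2t-1}\}$, and now each $uu_j$ lies in at least $2(t+1)>2t$ hyperedges, hence must be the heavy edge in every $\cH_2'$-hyperedge through it. But then in the $2t-1$ hyperedges $uu_jv$ through $uv$, the heavy edge is incident to $u$, so $uv$ should have been oriented toward $u$, contradicting its choice as an out-edge. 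The contradiction is thus with the orientation, not with $F_t^3$-freeness directly; this is the device you are missing, and without it (or a genuine substitute) the proof does not go through. Note also that the paper takes the heavy-edge threshold to be $5t$ rather than $3t+1$, which is what makes Lemma~\ref{greedy} applicable after matching the light edges.
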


    \begin{proof}[\bf Proof of Claim] 
    Let $\cH_2'$ denote the set of hyperedges in $\cH_2$ with two edges in exactly $2t$ hyperedges and the third edge in at least $5t$ hyperedges. As we have observed, there are $O(n)$ edges in $E_2$ that are in less than $2t$ hyperedges, thus there are $O(n)$ hyperedges in $\cH_2$ which contain such an edge. Also, there are $O(n)$ hyperedges of $\cH_2$ that contain three edges in $E_1\cup E_2\cup E_3$. The rest of the hyperedges in $\cH_2$ contain two edges in exactly $2t$ hyperedges, and a $(3t+1)$-heavy edge. 
    There are $O(n)$ $(3t+1)$-heavy edges, and at most $5t-1$ times that many hyperedges contain a $(5t-1)$-light, $(3t+1)$-heavy edge. Thus $|\cH_2|=|\cH_2'|+O(n)$. We also have that all but $O(n)$ edges of $E_2$ are in exactly $2t$ hyperedges of $\cH_2'$.

    Consider an edge $uv$ that is in exactly $2t$ hyperedges of $\cH_2'$. In each of those hyperedges $uvw$ one of $uw$ and $vw$ is in exactly $2t$ hyperedges, and the other one is in at least $5t$ hperedges. We orient $uv$ towards $v$ if in those $2t$ hyperedges, $vw$ is $5t$-heavy for more $w$ than $uw$, and towards $u$ if $uw$ is $5t$-heavy for more $w$ than $vw$. In the case of equality we orient arbitrarily.

Let us say that a vertex $u$ is \textit{small} with respect to a hyperedge $uvw\in \cH_2'$ if $vw$ is $5t$-heavy. Then each directed edge $\overrightarrow{uv}$ has that $u$ is the small vertex with respect to at least $t$ hyperedges containing $uv$. We are going to be interested in hyperedges where at least one of the edges go out from the small vertex. If there are $O(n)$ such hyperedges, then there are $O(n)$ directed edges, thus $|E_2|=O(n)$, which implies the claim. Clearly, this is the case if for some constant $q$, for each vertex $u$, there are at most $q$ edges that go out from $u$ and are in a hyperedge where $u$ is the small vertex.

Assume that more than $q$ edges go out from $u$ and are in a hyperedge where $u$ is the small vertex. Then clearly there is an $S_r^3$ formed by such hyperedges for some $r\ge q/2t$. We let $w(e)=2t-1$ for the edges contained in exactly $2t$ hyperedges and $w(e)=5t-1$ for the $5t$-heavy edges. Then by Lemma \ref{nice}, there is a nice $S_{t+1}^3$ among these hyperedges. We consider an $S_t^3$ subhypergraph and the corresponding auxiliary bipartite graph. There is no matching covering the edges contained in exactly $2t$ hyperedges. Since each of the vertices of $A$ have degree $2t-1$, it is possible only if they have the same $2t-1$ neighbors. This means that we have vertices $u_1,\dots, u_{2t-1}$ such that each of the $2t$ edges incident to $u$ 
are extended with each $u_i$ to a hyperedge of $\cH_2'$. 
This also holds for the last hyperedge of the nice $S_{t+1}^3$. But this means that for each $i$, $uu_i$ is in at least $2t+2$ hyperedges, thus in at least $5t$ hyperedges by the definition of $\cH_2'$. Let $v\neq u$ be a vertex of the nice $S_{t+1}^3$. Then we know that $uv$ is directed towards $v$ by the choice of $S_r^3$. However, we also know that in the $2t-1$ hyperedges $uu_iv$ containing this edge $uv$, $uu_i$ is $5t$-heavy, thus we should orient $uv$ towards $u$, a contradiction, completing the proof of the claim.
    \end{proof}

    It is left to deal with the case all but $O(n)$ edges are in $E_1$ and are contained in exactly $t$ hyperedges of $\cH_1$, and all but $O(n)$ hyperedges of $\cH$ are in $\cH_1$ and contain exactly one $t$-light edge. Let $r$ be a sufficiently large integer and $p$ and $q$ be sufficiently large integers compared to $r$. Let $\cH_1'$ denote the set of hyperedges in $\cH_1$ that contain a $t$-light edge and two edges that are contained in at least $q$ hyperedges of $\cH_1$. The other hyperedges of $\cH_1$ each contain a $(t+1)$-heavy edge that is contained in at most $q-1$ hyperedges of $\cH_1$, thus $|\cH_1\setminus \cH_1'|$ is at most $q-1$ times the number of such edges, which is $O(n)$. Therefore, all but $O(n)$ hyperedges of $\cH$ are in $\cH_1'$. For each of the edges $uv$ that are in at least $q$ hyperedges of $\cH_1'$, we consider how many of those hyperedges have that the $t$-light edge is incident to $v$, and if the number is less than $p$, we delete those less than $p$ hyperedges. Similarly, if less than $p$ of those hyperedges have the $t$-light edge incident to $u$, then we delete those hyperedges. Let $\cH_1''$ denote the resulting hypergraph. Clearly, we considered $O(n)$ edges and we deleted at most $2p$ hyperedges for each such edge, thus all but $O(n)$ hyperedges of $\cH$ are in $\cH_1''$. Let $E'$ denote the set of $t$-light edges that are in exactly $t$ hyperedges of $\cH_1''$. We can count the hyperedges of $\cH_1''$ by taking an edge of $E'$ and all the $t$ hyperedges containing it, and then taking a $t$-light edge not in $E'$ and all the at most $t-1$ hyperedges of $\cH_1''$ containing it, thus $|\cH_1''|\le t|E'|+(t-1)\big(\binom{n}{2}-|E'|\big)$. Therefore, $|E'|=\binom{n}{2}-O(n)$.

    Let the weight of the $t$-light edges be $t-1$, and the weight of the other edges be $q-1$. The intersection of the link graph of a vertex $w$ in $\cH_1''$ with $E'$ is called the \textit{weak link graph} of $w$.

        \begin{clm}
        There are pairwise disjoint $t$-sets of vertices $U_1,\dots, U_k$ such that for
        each edge $uv\in E'$ 
        that is not contained in any $U_j$, the following holds. If $t>2$, then the link set of $uv$ is some $U_i$. If $t=2$, then either the link set of $uv$ is $U_i$, or $u\in U_i$, $v\in U_j$ and the link set of $uv$ is $U_i\cup U_j\setminus \{u,v\}$. 
        Furthermore, for each $U_i$ there are $r$ independent edges in $E'$ that have $U_i$ as their link set.
    \end{clm}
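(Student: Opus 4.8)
The plan is to extract the sets $U_i$ from the \emph{weak link graphs} $W(c)=\{xy\in E':xyc\in\cH\}$ and to pin the link sets down using the nice-star machinery of Lemma~\ref{nice} together with Proposition~\ref{hall}. First I would isolate the basic obstruction. Suppose $c$ is a vertex and $e_1,\dots,e_t\in E'$ are pairwise disjoint, each disjoint from $c$, with $c\in L(e_i)$ for all $i$, where $L(\cdot)$ denotes the link set in $\cH$. Then $ce_i\in\cH$, so these $t$ hyperedges form a star $S_t^3$ with center $c$ and far edges $e_i$. Since $e_i\in E'$ is in exactly $t$ hyperedges of $\cH$, each of these star hyperedges lies in $\cH_1''\subseteq\cH_1'$, so the two close edges inside $ce_i$ are $q$-heavy, hence $5t$-heavy. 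Setting $w=t-1$ on the far edges and $w=5t-1$ on the close edges and applying Lemma~\ref{nice} to a sufficiently large such star, I obtain a \emph{nice} copy; a matching covering the far edges in the corresponding auxiliary bipartite graph would then give $F_t^3$ by Lemma~\ref{greedy}, which is impossible. As each far edge has exactly $t-1$ link vertices outside the star by niceness, Proposition~\ref{hall} with $k=t$ forces all the $e_i$ to share a common link set $U:=\{c\}\cup D$ of size $t$. This rigidity is the engine of the whole argument.

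Next I would set up the hubs. Exploiting the refinement built into $\cH_1''$ — each of its hyperedges lies in $\cH_1'$, and for every heavy edge at least $p$ of its hyperedges carry the $t$-light edge on each prescribed endpoint — I would first show that for every $uv\in E'$ and every $c\in L(uv)$ the vertex $c$ is \emph{popular}, i.e.\ $W(c)$ is large: $uc$ and $vc$ are $q$-heavy, and the $p$-cleaning produces at least $p$ edges of $W(c)$ through $v$ and at least $p$ through $u$. I would then define each $U_i$ to be a common link set produced by the obstruction at a popular center, and check the two required properties. Its size is $t$ by construction; its vertices are themselves popular since each lies in the link set of the entire family of independent far edges used to build the star, and that same family supplies the $r$ independent edges of $E'$ with link set $U_i$ demanded by the ``furthermore'' clause. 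Finally, for an arbitrary $uv\in E'$ disjoint from every hub I would take $c\in L(uv)$, insert $uv$ as one far edge of a nice star at $c$, and read off $L(uv)=U(c)$ from Proposition~\ref{hall}; thus $L(uv)$ equals one of the $U_i$ when $t>2$.

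Disjointness of the hubs for $t>2$ is then immediate from the same rigidity: if $c\in U_i\cap U_j$, then the witnessing edges for $U_i$ and for $U_j$ all lie in $W(c)$, and two independent ones can be completed to a common nice star, forcing $U_i=U_j$. The separate behaviour at $t=2$ enters exactly here, because Proposition~\ref{hall} with $k=2$ only forces a common neighbourhood of size $1$; this leaves room for an edge to \emph{bridge} two hubs, with $u\in U_i$, $v\in U_j$ and $L(uv)=U_i\cup U_j\setminus\{u,v\}$, which is the alternative recorded in the statement. I would dispose of this case by a direct analysis of the two-element link sets.

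The step I expect to be hardest is guaranteeing that the centers we use really admit the disjoint far edges the obstruction needs: a popular $c$ must have a large \emph{matching} in $W(c)$, not merely a weak link graph concentrated on a few vertices. This is where the bound on heavy edges must be used, since a small vertex cover of $W(c)$ would force some $ca$ to be heavy with very large multiplicity, and only $O(n)$ heavy edges are available; the concentrated alternative can therefore be isolated and charged away, leaving enough spread-out edges to build the nice stars. Making this dichotomy work uniformly over all popular centers — so that \emph{every} edge of $E'$ outside the hubs, and not just almost every one, has its link set pinned to a single $U_i$ — is the delicate part of the proof.
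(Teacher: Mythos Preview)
Your overall engine---nice stars plus Proposition~\ref{hall}---is exactly what the paper uses, and the rigidity you isolate is the right one. Two points deserve correction, however.

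First, the step you flag as hardest is in fact immediate. You already observe that the $p$-cleaning gives $\deg_{W(c)}(u)\ge p$ and $\deg_{W(c)}(v)\ge p$; the same argument applies to \emph{every} vertex of $W(c)$ with nonzero degree, so $W(c)$ has minimum degree at least $p$ on its support and hence a matching of size at least $p/2$. No heavy-edge counting or ``concentrated vs.\ spread-out'' dichotomy is needed; the paper dispatches this in one line.

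Second, there are two genuine gaps. Your disjointness argument (``complete two independent witnesses to a common nice star, forcing $U_i=U_j$'') does not go through as written: Lemma~\ref{nice} gives you \emph{some} nice substar of a large star, with no control over which hyperedges survive, so you may end up with a nice $S_t^3$ consisting entirely of $U_i$-witnesses or entirely of $U_j$-witnesses. The paper avoids this by a direct construction: if $u\in U\cap U'$ with $U\ne U'$, pick $u_1,\dots,u_{t-1}\in U\setminus\{u\}$ and $u_t\in U'\setminus U$, take $t-1$ witnessing edges for $U$ and one for $U'$, form the triangles $ux_iy_i$, extend each $t$-light $x_iy_i$ by $u_i$, and finish greedily via Lemma~\ref{greedy}. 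Separately, you only treat edges $uv\in E'$ \emph{disjoint} from every hub, whereas the claim concerns edges not \emph{contained} in any $U_j$; the case $u\in U$ (so $uv$ cannot itself be a far edge of the nice star at $w$) is precisely the delicate one, and the paper devotes a paragraph to it, comparing the hub $U'$ produced at each $w'\in L(uv)$ against $U$ and building an $F_t^3$ whenever $|U'\setminus U|\ge 2$ or $|U\setminus U'|\ge 2$. This comparison is also where the $t=2$ bridge alternative is forced.
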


    \begin{proof}[\bf Proof of Claim]
         Let $uvw$ be a hyperedge of $\cH_1''$ with $uv$ being the $t$-light edge. Observe that by the definition of $\cH_1''$, in the weak link graph of $w$ every vertex has degree 0 or at least $p$. Therefore, if the link graph is not empty, it has a matching of size at least $p/2$. This corresponds to a sufficiently large star in $\cH_1''$, thus by Lemma \ref{nice} we can find a nice star $S_{r+t-2}^3$ in $\cH_1''$ with center $w$, we denote it by $S$. 
         
         Consider an $S_t^3$ inside $S$ and the corresponding auxiliary bipartite graph, then there is no matching covering $A$, thus by Proposition \ref{hall} all vertices of $A$ have the same $t-1$ neighbors. It means that there are $t-1$ vertices $u_2,\dots,u_t$ that are in the link set of the $t$ $t$-light edges in $S$. This holds for any set of $t$ hyperedges in $S$, so this must hold for each of the hyperedges. Therefore, $U:=\{w,u_2,\dots,u_t\}$ is a $t$-set such that there are at least $r$ independent edges in $E'$ that have $U$ as their link set. Note that it is possible that $U$ contains $u$ or $v$.

          If one of $u$ and $v$, say $u$ is in $U$, then consider the link set of $uv$. For each vertex $w'$ in the link set, we can apply the above procedure and obtain a $t$-set $U'$ containing $w'$ that is the link set of $r$ independent edges. If there are two vertices $u',u''\in U'\setminus U$, then we take $t-1$ independent edges with $U$ as their link set such that neither of them contains $u'$, nor $u''$, nor $v$ and we pair them with distinct vertices $z$ from $U\setminus \{u\}$. For each such edge $xy$, we take the triangle $uxy$, and for the $t$-light edge $xy$, we take the hyperedge $xyz$, where we paired $xy$ with $z\in U\setminus \{u\}$. We also take the triangle $uu'v$ and for the $t$-light edge $uv$ we take the hyperedge $uu''v$. We fixed a copy of $F_t$ and we fixed the hyperedges for the $t$-light edges. The rest of the edges are $q$-heavy, thus we can find a copy of $F_t^3$ by Lemma \ref{greedy}, a contradiction. The same argument works if $U$ contains two vertices not in $U'$. If $t>3$, then either $U'\setminus U$ or $U\setminus U'$ has size two, while if $t=2$, then both have size one if and only if the link set of $uv$ is $U\cup U'\setminus \{u,v\}$.

         For an arbitrary $t$-light edge $uv$, we found a $t$-set $U$ satisfying the desired properties. We also proved the furthermore part. It is left to show that the $t$-sets $U$ obtained this way are pairwise disjoint. Assume otherwise that $U\neq U'$ but $u\in U\cap U'$. Then we have $u_1,\dots,u_{t-1}\in U\setminus \{u\}$ and $u_t\in U'\setminus U$. Let us take $t-1$ independent edges $x_1y_1,\dots,x_{t-1}y_{t-1}$ that have $U$ as their link set and do not contain $u_t$. Let us take an edge $x_ty_t$ with link set $U'$ such that $x_t,y_t\not\in U$. Then we take the triangles $ux_iy_i$, and for the $t$-light edges we take the hyperedges $x_iy_iu_i$. Again, we fixed a copy of $F_t$ and we fixed the hyperedges for the $t$-light edges. The rest of the edges are $q$-heavy, thus we can find a copy of $F_t^3$ by Lemma \ref{greedy}, a contradiction completing the proof.
    \end{proof}

    If $t>2$, then for each edge $uv\in E'$ that is not inside any $U_j$, there is a unique set $U_i$ given by the above claim that is the link set of $uv$. We say that the color of the edge $uv$ is $i$. Then all but $O(n)$ edges have a color. In the case of $t=2$, we do not define the color of the edges if $u\in U_i$, $v\in U_j$ with $i\neq j$.

    \begin{clm}
        All but $O(n)$ edges of $E'$ have the same color.
    \end{clm}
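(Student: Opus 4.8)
The plan is to rule out a second substantial colour by building a copy of $F_t^3$ from a single ``cross edge'' of a different colour. For a colour $c$ say that a vertex $u$ is \emph{$c$-rich} if it lies in at least $5t$ colour-$c$ edges of $E'$. The core construction is this. Suppose $i\neq j$, that $cd\in E'$ has colour $j$, and that both $c$ and $d$ are $i$-rich. Take a vertex $z\in U_i$ as the centre of a fan. By the furthermore part of the previous claim colour $i$ has at least $r$ independent edges with link set $U_i$, so (as $r$ is large compared with $t$) I may pick $t-1$ pairwise disjoint colour-$i$ edges $a_1b_1,\dots,a_{t-1}b_{t-1}$ in $E'$, all avoiding $\{c,d,z\}\cup U_i\cup U_j$ and with $i$-rich endpoints. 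The $t$ pairwise disjoint pairs $a_1b_1,\dots,a_{t-1}b_{t-1},cd$ together with $z$ form a copy of the core $F_t$ in the shadow of $\cH$: the hyperedge $a_mb_mz$ exists because $U_i$ is the link set of $a_mb_m$, while $zc$ and $zd$ lie in hyperedges since $c,d$ are $i$-rich and $z\in U_i$.

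Now expand the far edges explicitly: enlarge each $a_mb_m$ by a distinct vertex of $U_i\setminus\{z\}$, and enlarge $cd$ by a vertex of $U_j$. This is the decisive bookkeeping. Placing the centre inside $U_i$ uses up one vertex of $U_i$, leaving exactly $t-1$ vertices for the $t-1$ colour-$i$ far edges, so colour $i$ on its own can only produce $t-1$ triangles (this is precisely why the extremal configuration is $F_t^3$-free); the colour-$j$ edge $cd$ supplies the missing $t$-th triangle, and its expansion vertex comes from $U_j$, which is disjoint from $U_i$ and therefore fresh. The edges of the core that remain un-enlarged are the close edges $za_m,zb_m,zc,zd$, and each is $(5t-1)$-heavy because its endpoint other than $z$ is $i$-rich. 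Hence Lemma \ref{greedy} (note $|E(F_t)|+|V(F_t)|-2=5t-1$) completes a copy of $F_t^3$, a contradiction.

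With this in hand I would reduce the statement to a counting argument. Let $i$ be the colour carried by the most edges of $E'$, and let $X_i$ denote the set of vertices incident to a colour-$i$ edge. The construction shows that no edge of $E'$ with colour $j\neq i$ can have both endpoints $i$-rich; equivalently, every non-$i$ edge of $E'$ has an endpoint in the set $Z$ of vertices that are not $i$-rich, so the number of non-$i$ edges is at most $|Z|\cdot n$. The whole claim therefore rests on showing $|Z|=O(1)$. For this I would exploit that all but $O(n)$ pairs are coloured, so the coloured graph $\Gamma$ is almost complete (its complement has $O(n)$ edges). The same construction, applied to each colour in turn, forces the set of colour-$c$ edges with both endpoints $c$-rich to be monochromatic, so the rich sets of distinct colours meet only in $\Gamma$-independent — hence $O(\sqrt n)$-sized — sets. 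A vertex of $Z$ nevertheless has almost full degree in $\Gamma$, realised by colours other than $i$; if $Z$ were large this would force a second colour to acquire many edges between its own rich vertices, and an appropriately chosen colour-$i$ edge touching $Z$ would then serve as a cross edge for that colour, again yielding $F_t^3$. Converting this into the bound $|Z|=O(1)$, and thus $\sum_{c\neq i}(\text{colour-}c\text{ edges})=O(n)$, is the main obstacle. Finally, the case $t=2$ needs a small separate treatment for the edges whose link set is $U_i\cup U_j\setminus\{u,v\}$: such an edge may be expanded from either $U_i$ or $U_j$, and this extra freedom is used (rather than obstructing) when locating the cross edge.
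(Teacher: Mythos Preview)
Your core construction is correct and rather elegant: placing the centre at $z\in U_i$, filling $t-1$ triangles from colour $i$, and using the foreign colour-$j$ edge $cd$ for the last triangle does produce an $F_t^3$ (the checks that $c,d\notin U_i$, that the expansion vertices are all distinct, and that the close edges are $(5t-1)$-heavy all go through once one notes that any vertex in a colour-$i$ edge lies in at least $p\ge 5t$ of them). This is genuinely different from the paper's construction, which instead takes an \emph{arbitrary} vertex $v$ as the centre and, using only one colour $i$ with $v\notin U_i$, builds an $F_{\lfloor t/2\rfloor}^3$ (or an $F_{t-1}^3$ when $v\in U_i$). The paper's construction immediately bounds the number of colours incident to any single vertex by $1$ (for $t$ even) or $2$ (for $t$ odd), and the rest of the argument is a clean partition-and-count; your construction instead yields ``all colored edges inside $R_i$ have colour $i$'', equivalently $R_i\cap R_j$ is $\Gamma$-independent for $i\neq j$.

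The gap is exactly where you flag it: you have not shown $|Z|=O(1)$, and the sketch you offer does not work as written. In particular, the phrase ``an appropriately chosen colour-$i$ edge touching $Z$'' is self-defeating, since a vertex incident to any colour-$i$ edge is automatically $i$-rich and hence not in $Z$; there are no colour-$i$ edges touching $Z$. More broadly, your information so far (that each $R_i\cap R_j$ has size $O(\sqrt n)$) does not by itself control the number $k$ of colours, and without $k=O(1)$ the accumulated overlaps could be large. The paper handles this by first proving the per-vertex colour bound, then observing (for odd $t$) that edges between $\bigcup U_i$ and the rest are not in $E'$, forcing $k=O(1)$, and only then bounding the sizes $|A_{i,j}|$ and $|A_i|$. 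Your route may well be completable, but the reduction to $|Z|=O(1)$ needs a genuine new argument, not the one sketched; as it stands the proposal stops short of a proof.
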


\begin{proof}[\bf Proof of Claim]

First, we show that each vertex $v$ is incident to edges of at most one color, if $t$ is even, and to edges of at most two colors if $t$ is odd. Observe that if a vertex is incident to an edge of color $i$, then it is incident to at least $p$ edges of color $i$, since it has non-zero degree in the weak link graph of a vertex of $U_i$. Assume that $vw$ is of color $1$ and $v\not\in U_1=\{u_1,\dots,u_t\}$, then we consider the hyperedges $vu_1w_1,\dots, vu_{\lfloor t/2\rfloor} w_{\lfloor t/2\rfloor}$, where $w_i$ are distinct vertices in the link set of $vu_i$. The edges $vw_i$ are extended to hyperedges with $u_{t-i+1}$. Each other edge of these hyperedges is $q$-heavy, thus we can greedily extend them by Lemma \ref{greedy} to obtain an $F_{\lfloor t/2\rfloor}^3$. If $v\in U_1$, say $v=u_t$, we get an $F_{t-1}^3$ with center $v$ the more usual way: we consider $t-1$ independent edges $x_iy_i$ in $E'$ that have $U_1$ as their link set, extend each edge $x_iy_i$ with $u_i$, while the edges $vx_i,vy_i$ are $q$-heavy, thus we can greedily extend them by Lemma \ref{greedy} to obtain an $F_{t-1}^3$. Observe that each of the vertices of these subhypergraphs not in $U_1$ was chosen greedily, thus we can take copies of $F_{\lfloor t/2\rfloor}^3$ that share only $v$ for each color incident to $v$.

Consider now the case $t=2$. Not every edge of $E'$ has a color in this case, for example if $U_1=\{u_1,u_2\}$ and $U_2=\{u_3,u_4\}$, then $u_1u_3$ may be an edge of $E'$. In that case $u_1u_2u_3$ and $u_1u_3u_4$ are in $\cH_1''$. However, then $u_1u_4$ and $u_2u_3$ are $q$-heavy. Therefore, between $U_1$ and $U_2$, there are at least as many $q$-heavy edges as edges of $E'$. This holds for each pair $U_i,U_j$, thus, there are $O(n)$ edges of $E'$ between such sets. Therefore, for each $t$, all but $O(n)$ edges are colored edges of $E'$.
    
    If $t$ is even, each vertex is incident to edges of at most one color, thus there is no colored edge of $E'$ between the set of vertices incident to color $i$ and the set of vertices incident to color $j$. Let $A_i$ denote the set of vertices incident to some edge of color $i$, and $A_0$ denote the set of vertices not incident to any colored edges. Since there is no colored edge of $E'$ between $A_i$ and $A_j$, we have $|A_i||A_j|=O(n)$. 
    This holds for each pair $i,j$, hence it is easy to see that all but one of the sets $A_i$ contain $O(1)$ vertices, thus all but one of the colors have $O(1)$ edges, completing the proof in this case.

    If $t$ is odd, it is possible that some vertices are incident to edges of colors $i$ and $j$ both. However, we claim that there is no edge between such vertices. 
    Indeed, assume that $vw$ is such an edge, and without loss of generality, the color of $vw$ is not $i$. We have an $F_{t-1}^3$ with center $v$ by the above argument (if $v\in U_i\cup U_j$, then we have an $F^3_t$, a contradiction). There is
 a vertex $u'\in U_i$ that extends some $t$-light edge containing $v$ such that $u'$ is not used in this $F_{t-1}^3$, and analogously a vertex $u''\in U_j$. We will only use that $u'v$ and $u'w$ are $q$-heavy. We consider the triangle $vu'w$ as our last triangle. The edge $vw$ is extended by $u''$, while the other two new edges are extended greedily to obtain an $F_t^3$, a contradiction. We remark that this is the first time in the proof where we may use a triangle in the core $F_t$ that is not a hyperedge. 

 Next we show that if $v\in U_i$ and $vw\in E'$ ($w\notin U_i$), then $w$ is also incident to another color $j$. Observe that $vw$ has a vertex $u'\not\in U_i$ in its link set. Then $w$ has degree at least $p$ in the weak link graph of $u'$. In particular, an edge $ww'$ of $E'$ has $u'$ in its link set. If $w\in U_j$ for some $j(\neq i)$, then we can pick $w'\not\in U_j$. Then the color of $ww'$ cannot be $i$, and $ww'$ has a color.

Observe that any edge between $U_i$ and $U_j$ is not in $E'$ since $t>1$ is odd. Therefore, the number $k$ of distinct sets $U_i$ is $O(\sqrt{n})$, hence there are $n-O(\sqrt{n})$ vertices not in $U_i$. The edges between $\cup_{i=1}^k U_i$ and the rest of the vertices are not in $E'$, thus there are $O(n)$ such edges, which implies that $|\cup_{i=1}^k U_i|=O(1)$, i.e., $k=O(1)$.

 We claim that there are $O(\sqrt{n})$ vertices that are incident to two colors. Let $A_{i,j}$ denote the set of vertices incident to colors $i$ and $j$. Then we know that $\sum_{i,j\le k} \binom{|A_{i,j}|}{2}=O(n)$, because this counts the edges inside the sets $A_{i,j}$, which are not in $E'$. Then, by the inequality of the quadratic and arithmetic mean, $\sum_{i,j\le k}|A_{i,j}|\le O(\sqrt{n})$. 

Recall that $A_i$ denotes the set of vertices incident to edges of color $i$ and no other color. Then we partition the sets $A_i$ into two parts. No matter how we choose the partition, there are no edges of $E'$ between the parts, thus there are $O(n)$ such edges. One of the parts has $\Theta(n)$ vertices, hence the other part must have $O(1)$ vertices. We further partition the larger part as long as it consists of more than one part. We obtain that all but one of the sets $A_i$ has size $O(1)$. Such a color $i$ has that all the edges of color $i$ are inside $A_i\cup A_{i,1}\cup\dots\cup A_{i,k}$. Recall that $|A_{i,j}|=O(\sqrt{n})$. Therefore, $|A_i\cup A_{i,1}\cup\dots\cup A_{i,k}|=O(\sqrt{n})$, thus there are $O(n)$ edges of color $i$. We have $O(1)$ colors, completing the proof.
\end{proof}

We obtained that there is a set $U_1=\{u_1,\dots,u_t\}$ that is the link set of $\binom{n}{2}-O(n)$ edges. We will now return to $\cH$ and compare it to the hypergraph $\cH_0$ we obtain by taking each hyperedge that contains at least one of the vertices of $U_1$. Let $\cG$ denote the set of hyperedges in $\cH$ that do not contain any element of $U_1$. Then clearly $|\cG|=O(n)$ and at most $|\cG|$ hyperedges that intersect $U$ are missing from $\cH$, since $|E(\cH)|\geq|E(\cH_0)|$. 

Let $Q$ denote the set of vertices outside $U_1$ that have $q$-heavy edges to each vertex of $U_1$, and $R$ denote the set of other vertices outside $U_1$. 
Observe that each edge from a vertex of $U_1$ to an endpoint of an edge of $E'$ is $q$-heavy, thus each edge incident to a vertex of $R$ is not in color 1, hence $|R|=O(\sqrt{n})$.
We claim that each hyperedge of $\cG$ contains at least two vertices of $R$. Indeed, if $vwx\in \cG$ with $v,w\in Q$, then consider the triangles $u_1vw$, $u_1v_iw_i$ for $i\ge 2$, when $v_iw_i$ are arbitrary independent edges with $u$ as their link set that do not contain $v,w$ and are in $Q$. Then $vw$ is extended to a hyperedge with $x$, $v_iw_i$ are extended to a hyperedge by $u_i$, while the other edges are $q$-heavy, thus we can greedily obtain an $F_t^3$, a contradiction. Let us remark that this is the second time that we obtain an $F_t^3$ where one of the core triangles does not necessarily form a hyperedge.

Let $R'$ denote the set of vertices in $R$ that are incident to a hyperedge of $\cG$ with a vertex in $Q$. We claim that there is no $q$-heavy edge between $U_1$ and $R'$. Indeed, if $uv$ is $q$-heavy, for some $v \in R'$, and $vwx\in \cG$ with $w\in Q$, then we take the triangle $uvw$, and choose hyperedges $uv_iw_i$ for $1\leq i\leq t-1$ inside $Q$ as above. Then,  we extend $vw$ by $x$ and $v_iw_i$ by the $t-1$ other vertices in $U_1$, and the rest of the edges are $q$-heavy, so we can greedily extend them, and obtain a copy of $F_t^3$, a contradiction. 

If $\cG$ is not empty, then there is a vertex $v\in R$. Then there is $u\in U_1$ such that $uv$ is $(q-1)$-light, thus $\Omega(n)$ hyperedges containing both $u$ and $v$ are missing from $\cH$, hence $|\cG|=\Omega(n)$. More precisely, we have at least $t(n-t-q)|R'|$ and at least $(n-t-q)(|R|-|R'|)$ missing hyperedges. Then 
\begin{eqnarray}
|\cG|\geq t(n-t-q)|R'|+(n-t-q)(|R|-|R'|)=((t-1)|R'|+|R|)n-O(1).
\end{eqnarray}
Recall that $|\cG|=O(n)$, hence $|R|=O(1)$, and there are $O(1)$ hyperedges inside $R$. Therefore, there is a vertex in $R'$ and $|\cG|=\Theta(n)$. Then there are $\Theta(n)$ hyperedges in $\cG$ that contain a vertex from $Q$.

Consider the $q$-heavy edges inside $R'$. If there are at most $t|R'|-1$ such edges, then we count $|\cG|$ by picking such an edge first, then we obtain the upper bound $n(t|R'|-1)+O(1)$, which is a contradiction to inequality (1). 
Moreover, if each vertex of $Q$ that is incident to some edge of $E'$ has at most $t|R'|-1$ $q$-heavy edges in its link graph in $\cG$, then we again obtain the upper bound $n(t|R'|-1)+O(1)$, which is a contradiction to inequality (1).  

Otherwise, there is a vertex $v\in Q$ with more than $t|R'|-1$ $q$-heavy edges inside $R'$ that extend $v$ to hyperedges. By the Erd\H os-Gallai theorem \cite{EGa}, there is a $P_{2t+1}$ among such edges, moreover, there is also a $P_{2t+2}$ unless those edges form vertex-disjoint copies of $K_{2t+1}$.

We cut that path to $\lceil t/2\rceil$ vertex-disjoint copies of $P_4$ if possible. For each such path $w_1w_2w_3w_4$, we consider the triangle $vw_2w_3$. For the edge $vw_2$, we use the hyperedge $vw_1w_2$, for the edge $vw_3$ we use the hyperedge $vw_3w_4$, while the edge $w_2w_3$ is $q$-heavy. This way we obtain $\lceil t/2\rceil$ triangles. We also have $\lfloor t/2\rfloor$ vertex-disjoint triangles of the form $vu_iz_i$, where $i$ is even and $vz_i$ is an arbitrary edge of color 1. The edge $vz_i$ is extended to a hyperedge by $u_{i-1}$.
These triangles form an $F_t$, and for each edge of this $F_t$, either we fixed a hyperedge containing it and no other vertices of the $F_t$ or the other fixed hyperedges, or the edge is $q$-heavy, thus we can find an $F_t^3$, a contradiction. 

If we cannot cut the path to $\lceil t/2\rceil$ vertex-disjoint copies of $P_4$, then $t$ is odd, and the $q$-heavy edges inside $R'$ in the link graph in $\cG$ of $v$ form cliques of order $2t+1$. If there is more than one such clique, we can find $\lceil t/2\rceil$ vertex-disjoint copies of $P_4$ among those edges and proceed as above. If there is only one such clique, then $|R'|=2t+1$ (indeed, if $2t+1<|R'|<2(2t+1)$, then $|E(K_{2t+1})|+|E(K_{|R'|-2t-1})|\leq t|R'|-1$, a contradiction). Let $Q'$ denote the set of vertices in $Q$ that do not have all the edges inside $R'$ in their link graph in $\cG$. Then we have that $|\cG|\le t(2t+1)(|Q|-|Q'|)+(t(2t+1)-1)|Q'|+O(1)=t(2t+1)|Q|-|Q'|+O(1)$, thus $|Q'|=O(1)$ by inequality (1). In particular, there is an edge of $E'$ such that both endpoints are not in $Q'$, without loss of generality let $vv'$ be that edge.

Now from $R'$ we take $\lfloor t/2\rfloor$ vertex-disjoint copies of $P_4$ and a copy of $P_3$ $w_1w_2w_3$. We obtain $\lfloor t/2\rfloor$ triangles this way, and $\lfloor t/2\rfloor$ triangles of the form $vu_iz_i$. Recall that $u_t$ is not in any of these triangles, nor in the hyperedges fixed for the edges $vz_i$. Consider the triangle $vv'w_2$. Note that this is the third time we use a triangle that may not be a hyperedge. For the edge $vw_2$, we use the hyperedge $vw_2w_1$. For the edge $v'w_2$, we use the hyperedge $v'w_2w_3$. For the edge $vv'$, we use the hyperedge $vv'u_t$. Then we found $t$ triangles forming an $F_t$, and for each edge of this $F_t$, either we fixed a hyperedge containing it and no other vertices of the $F_t$ or the other fixed hyperedges, or the edge is $q$-heavy, thus we can find an $F_t^3$, a contradiction. Thus, $|\cG|=0$ and $|\cH|\leq \binom{n}{3}-\binom{n-t}{3}$, completing the proof. 
\end{proof}

\section{Concluding remarks}

Let $F_t^3(i)$ denote the hypergraph we obtain from $F_t^3$ by adding $i$ triangles of the core $F_t$ as hyperedges. The proof of Theorem \ref{thmnew1} actually shows that if $i<t$, then $\ex_3(n,F_t^3(i))=\binom{n}{3}-\binom{n-t}{3}$ for sufficiently large $n$, and $\ex_3(n,F_t^3(t))=t\binom{n}{2}+O(n)$. It seems likely that $\ex_3(n,F_t^3(t))=\binom{n}{3}-\binom{n-t}{3}$ also holds for sufficiently large $n$.

The proof of Theorem \ref{thmnew1} also gives stability in the following sense. If an $F_t^3$-free hypergraph is not a subhypergraph of the extremal construction, then it has $t\binom{n}{2}-\Omega(n)$ hyperedges. 


A \textit{$(t,k)$-fan}, denoted by $F_{t,k}$, is the graph on $(k-1)t+1$ vertices consisting of $t$ copies of $K_k$ sharing exactly one vertex. Tang, Li and Yan \cite{tly} determined $\ex_3(n,F_{t,k}^3)$ for sufficiently large $n$ if $k>3$. Note that this is a very different problem, the so-called \emph{non-degenerate} case, where the chromatic number is larger than the uniformity, thus it is obvious that $\ex_3(n,F^3_{t,k})=\Theta(n^3)$. Moreover, the asymptotics is also known for any graph in the non-degenerate case by a theorem in \cite{MuVe}, see \cite{pttw} for a proof. Gerbner \cite{Ge1} determined $\ex_r(n,F^r_{2,k})$ if $k>r$ and $n$ is sufficiently large.

A natural question is to study $\ex_r(n,F_t^r)$ or more generally $\ex_r(n,F_{t,k}^r)$ for $r>3$. Our methods do not seem to be applicable for this problem, but we can determine the order of magnitude.

\begin{proposition}
    $\ex_r(n,F_{t,k}^r)=\Theta(n^r)$ if $k>r$ and $\Theta(n^{r-1})$ if $2<k\le r$.
\end{proposition}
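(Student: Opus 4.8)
The plan is to establish the two bounds separately, treating the upper and lower bounds as matching statements in each regime of $k$ relative to $r$.

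\medskip

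\noindent\textbf{The non-degenerate case $k>r$.} Here I expect $\ex_r(n,F_{t,k}^r)=\Theta(n^r)$. For the lower bound, the core $F_{t,k}$ contains $K_k$ as a subgraph, and $K_k$ is $k$-chromatic with $k>r$, so $F_{t,k}^r$ is an $r$-graph whose core requires more than $r$ colors. The standard observation (as noted in the excerpt, following \cite{MuVe}) is that any expansion of a graph with chromatic number exceeding the uniformity has positive Tur\'an density, because a complete balanced $(k-1)$-partite $r$-graph avoids the expansion while having $\Theta(n^r)$ edges. Concretely, I would exhibit a construction: partition $[n]$ into $k-1$ nearly equal parts and take all $r$-sets meeting at least two parts (or use the complete $(k-1)$-partite $r$-graph), which contains no copy of $F_{t,k}^r$ since any core $K_k$ would need $k$ vertices in distinct parts. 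This gives $\Omega(n^r)$. The upper bound $O(n^r)$ is trivial since there are only $\binom{n}{r}=O(n^r)$ edges total. Thus both bounds are of order $n^r$.

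\medskip

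\noindent\textbf{The degenerate case $2<k\le r$.} Here I expect $\ex_r(n,F_{t,k}^r)=\Theta(n^{r-1})$. For the upper bound, I would show that an $F_{t,k}^r$-free hypergraph has $O(n^{r-1})$ edges. The key is that $F_{t,k}^r$ contains the expansion $S_t^r$ of a star (indeed $t$ copies of $K_k$, each containing an edge at the common vertex, yield a sunflower-type configuration), and more usefully one can fix a vertex and bound link hypergraphs. The cleanest route is to invoke the Duke--Erd\H{o}s-type bound: since $k\le r$, the core $F_{t,k}$ when expanded still forces a forbidden degenerate substructure, and I would argue that avoiding $F_{t,k}^r$ forces every $(r-1)$-set to have bounded degree up to lower-order terms, or more directly adapt Lemma~\ref{greedy} and Lemma~\ref{nice} to show that too many edges through a common vertex produce the expansion greedily. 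For the lower bound $\Omega(n^{r-1})$, I would take a fixed set $U$ of $k-2$ vertices and include every $r$-edge containing all of $U$; this gives $\binom{n-k+2}{r-k+2}=\Theta(n^{r-k+2})$ edges, which is only $\Theta(n^{r-1})$ when $k=3$, so for general $k$ I would instead fix $k-2$ vertices differently or take all $r$-sets containing at least one vertex of a fixed single vertex (giving $\Theta(n^{r-1})$) and verify $F_{t,k}^r$-freeness: any core $K_k$ has $\binom{k}{2}$ edges on $k$ vertices, and each expanded edge would need to use the fixed vertex, which is impossible once $k\ge 3$ since $K_k$ has independent-edge structure forcing an edge disjoint from any single vertex.

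\medskip

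\noindent\textbf{Main obstacle.} The hardest part will be the upper bound in the degenerate case $2<k\le r$: proving $O(n^{r-1})$ rather than merely $O(n^r)$. This requires showing that the expansion is genuinely degenerate, i.e., that one cannot pack more than $O(n^{r-1})$ edges while avoiding $F_{t,k}^r$. I would reduce this to a statement about the expansion of $K_k$ containing the expansion of a single edge with a heavy link, then iterate the greedy embedding argument of Lemma~\ref{greedy}: if some $(r-1)$-set were contained in super-constantly many edges, its link would be large enough to build the $t$ vertex-disjoint expanded cliques sharing the chosen center vertex, using the niceness-selection of Lemma~\ref{nice} to keep the newly added expansion vertices distinct. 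Verifying that $k\ge 3$ (so that each $K_k$ contains at least one edge not incident to the shared center) is precisely what makes the link of an $(r-1)$-set the right object to control, and confirming the exact exponent $r-1$ (as opposed to $r-k+2$) in both bounds is the delicate accounting I would need to get right.
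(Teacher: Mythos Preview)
Your handling of the non-degenerate case $k>r$ and the degenerate lower bound are essentially fine and match the paper's approach (the paper simply cites \cite{MuVe} for $k>r$, and for the lower bound takes all $r$-sets through a fixed vertex, noting that this hypergraph has no two independent hyperedges while $F_{t,k}^r$ does).

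The genuine gap is in your upper bound for $2<k\le r$. Both of your proposed mechanisms fail on the very construction you use for the lower bound. If $\cH$ is the set of all $r$-edges through a fixed vertex $v$, then (i) every $(r-1)$-set containing $v$ lies in $n-r+1$ hyperedges, so it is not true that ``every $(r-1)$-set has bounded degree up to lower-order terms''; and (ii) there are $\binom{n-1}{r-1}$ hyperedges through $v$, so ``too many edges through a common vertex'' cannot by itself force a copy of $F_{t,k}^r$. Consequently the greedy/niceness machinery of Lemmas~\ref{greedy} and~\ref{nice} applied to a single heavy $(r-1)$-set or a single high-degree vertex cannot produce the expansion: the link of that $(r-1)$-set is just a set of vertices, carrying no clique structure on which to anchor the $t$ copies of $K_k$.

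The paper bypasses this entirely by quoting a black-box inequality of Gerbner \cite{gerb},
\[
\ex_r(n,F^r)\le O(n^{r-1})+\ex(n,K_r,F),
\]
which converts the hypergraph problem into a generalized Tur\'an problem for graphs. It then bounds $\ex(n,K_r,F_{t,k})$: for $k=3$ this is $O(n)$ by \cite{alon,zcggyh,Ge1}, and for $3<k\le r$ one counts $K_r$'s by first choosing a vertex and then a $K_{r-1}$ in its (necessarily $tK_{k-1}$-free) neighbourhood, invoking $\ex(n-1,K_{r-1},tK_{k-1})=\Theta(n^x)$ with $x\le r-2$ from \cite{gmv,ger}. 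This yields $\ex(n,K_r,F_{t,k})=O(n^{r-1})$ and hence the claimed upper bound. If you want a self-contained argument, you would need to reproduce the reduction to $\ex(n,K_r,F_{t,k})$ rather than attack codegrees directly.
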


\begin{proof}[\bf Proof]
We have mentioned that in the non-degenerate case $k>r$ even the asymptotics is known \cite{MuVe}, thus we only have to deal with the case $k\le r$. The lower bound is given by taking all the hyperedges containing a fixed element. Indeed, this hypergraph does not contain two independent hyperedges, while $F_{t,k}^r$ does.

    Gerbner \cite{gerb} proved that for any $F$, $\ex_r(n,F^r)=O(n^{r-1})+\ex(n,K_r,F)$, where $\ex(n,K_r,F)$ is the largest number of copies of $K_r$ in $n$-vertex $F$-free graphs. Alon and Shikhelman \cite{alon} proved that $\ex(n,K_3,F_t)=O(n)$, the exact value of $\ex(n,K_3,F_t)$ was determined in \cite{zcggyh}. Gerbner \cite{Ge1} proved $\ex(n,K_r,F_t)=O(n)$ for $r>3$. Combining these completes the proof for $k=3$.

    For larger $k\le r$, we are not aware of any result on $\ex(n,K_r,F_{t,k})$, but we can prove a bound suitable for our purposes. Recall that $F_{t,k}$ consists of a vertex joined to each vertex of $t$ vertex-disjoint copies of $K_{k-1}$, which we denote by $tK_{k-1}$. In an $n$-vertex $F_{t,k}$-free graph we can count the copies of $K_r$ by picking a vertex first, and then a $K_{r-1}$ from its neighborhood. The neighborhood must be $tK_{k-1}$-free, thus $\ex(n,K_r,F_{t,k})\le n\cdot \ex(n-1,K_{r-1},tK_{k-1})$. The order of magnitude of $\ex(n-1,K_{r-1},tK_{k-1})$ was determined in \cite{gmv}, its exact value was determined in \cite{ger}. We have $\ex(n-1,K_{r-1},tK_{k-1})=\Theta(n^x)$, where $x=\big\lfloor \frac{(k-1)t-r}{t-1}\big\rfloor-1\le k-2\le r-2$. Combining the above bounds gives $\ex(n,K_r,F_{t,k})=O(n^{r-1})$, thus $\ex_r(n,F_{t,k}^r)=O(n^{r-1})$, completing the proof.
\end{proof}

One can define expansions of hypergraphs analogously to expansions of graphs. In particular, we consider the $(r-1)$-graph on $t(r-1)+1$ vertices that consists of $t$ copies of the $(r-1)$-uniform $r$-clique $\cK_r^{(r-1)}$ that intersect at exactly one common vertex. Its $r$-expansion is obtained by adding a new vertex to each of the $tr$ hyperedges such that these $tr$ new vertices are distinct from each other and are not in the original $(r-1)$-graph. Let us denote this $r$-graph by $F(t,r)$. 

\begin{thm}
    For any $t>1$ we have that $\ex_r(n,F(t,r))=t\binom{n}{r-1}+O(n^{r-2})$.
\end{thm}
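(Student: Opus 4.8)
The plan is to determine $\ex_r(n,F(t,r))$ up to lower-order terms by proving matching bounds. The structure of $F(t,r)$ is a hypergraph analogue of the $t$-fan: it has a matching of $t$ ``far'' hyperedges (the expansion vertices together with $r-2$ of the clique vertices) and a central vertex that connects to each far piece through ``close'' hyperedges. Consequently $F(t,r)$ contains a matching of size $t$, and therefore the lower bound $\ex_r(n,F(t,r))\ge t\binom{n}{r-1}+O(n^{r-2})$ should be witnessed by essentially the same construction as in Theorem \ref{thmnew1}: take a set $U$ of $t$ vertices and all $r$-sets meeting $U$. The number of such hyperedges is $\binom{n}{r}-\binom{n-t}{r}=t\binom{n-1}{r-1}+O(n^{r-2})=t\binom{n}{r-1}+O(n^{r-2})$. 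One must check that this construction is $F(t,r)$-free, arguing as in the proof of Theorem \ref{thmnew1}: any copy of $F(t,r)$ would force the $t$ far hyperedges of the matching each to meet $U$ in exactly one vertex, and then tracing the close hyperedges through the center produces a hyperedge forced to lie outside $U$ while simultaneously needing a vertex of $U$, a contradiction.

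For the upper bound $\ex_r(n,F(t,r))\le t\binom{n}{r-1}+O(n^{r-2})$, I would mimic the first, ``easy'' portion of the proof of Theorem \ref{thmnew1}, namely the part that yields the crude bound $|\cH|\le t\binom{n}{2}+O(n)$ in the $3$-uniform case. First I would classify the $(r-1)$-sets $S$ (the links inside hyperedges) by heaviness: say $S$ is heavy if it lies in many hyperedges. A Duke--Erd\H os type star-freeness argument is the engine here. The relevant obstruction is a large ``star'' $S(q,r)$: $q$ hyperedges sharing a single $(r-1)$-set or a single vertex arranged so that the associated cliques and expansion vertices can be spread out. Using the analogue of Lemma \ref{nice}, from a sufficiently large such star one extracts a nice sub-star in which the relevant links have enough private vertices, and then a greedy embedding in the spirit of Lemma \ref{greedy} produces a copy of $F(t,r)$. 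Thus the heavy part of $\cH$ cannot contain large stars, and by the Duke--Erd\H os bound $\ex_r(n,S(q,r))=O(n^{r-2})$ the number of hyperedges whose links are all ``too heavy'' is only $O(n^{r-2})$.

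I would then count as follows. Each hyperedge contains $r$ sub-$(r-1)$-sets (its links). Call an $(r-1)$-set light if it lies in at most $t$ hyperedges, and charge every hyperedge to a light link it contains whenever possible. A hyperedge all of whose links are $(t+1)$-heavy belongs to the ``star-rich'' regime just handled and is hence accounted for by the $O(n^{r-2})$ term. For the remaining hyperedges, each contains a light $(r-1)$-set, and there are at most $\binom{n}{r-1}$ such sets, each lying in at most $t$ hyperedges, giving at most $t\binom{n}{r-1}$ hyperedges before correcting for multiple counting and for the heavy exceptions. Making this charging clean is what gives the leading coefficient $t$: one wants each hyperedge charged to exactly one light link, and a refined heaviness hierarchy (light / medium-heavy / heavy), exactly as with the sets $E_1,E_2,E_3$ in the proof of Theorem \ref{thmnew1}, lets one absorb the overcounting into the $O(n^{r-2})$ error term.

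The main obstacle I anticipate is the greedy embedding step inside the nice star, which is the general-$r$ analogue of the delicate vertex-selection argument in the proof of Claim \ref{clami}. In the $3$-uniform case each hyperedge contributed three edges with distinct heaviness thresholds, and one selected private extension vertices in increasing order of available room; for general $r$ one must organize the $r$ links of each of the $t$ far hyperedges and the close hyperedges so that at every step the number of already-used vertices stays below the guaranteed number of choices. Getting the thresholds $w(e)$ right, so that the count of forbidden vertices at each greedy step is strictly less than the link size $w(e)$, requires care proportional to $r$, $t$, and the number of vertices of $F(t,r)$, but it is conceptually identical to the arguments already carried out, and no genuinely new idea beyond bookkeeping should be needed. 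Because the theorem only claims the value up to an $O(n^{r-2})$ additive error, I do not need the full exact-extremal machinery (the coloring, the sets $U_i$, and the clique-cutting) that occupies the bulk of the proof of Theorem \ref{thmnew1}; the asymptotic statement follows from the crude counting together with the star-freeness lemma alone.
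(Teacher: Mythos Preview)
Your plan matches the paper's own sketch in all its ingredients: the same lower-bound construction, the heaviness classification of $(r-1)$-sets, the Duke--Erd\H os bound on the star-free part, the nice-star lemma, the greedy embedding, and the observation that only the ``crude'' portion of the proof of Theorem~\ref{thmnew1} (Claims~\ref{firstcla} and~\ref{clami}) is needed.

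There is one genuine slip in how you describe the pieces fitting together. You assert that a hyperedge \emph{all} of whose $(r-1)$-sublinks are $(t+1)$-heavy ``belongs to the star-rich regime \ldots\ and is hence accounted for by the $O(n^{r-2})$ term,'' with the refined hierarchy serving only to ``absorb overcounting.'' That is not how the argument runs, and the claim as stated is not provable by the greedy embedding: if every sublink of every hyperedge in a nice $S_t^r$ has only $t$ private extension vertices, you cannot select the $rt$ \emph{distinct} extension vertices that $F(t,r)$ demands (the private sets may coincide). The greedy step in Claim~\ref{clami} works precisely because within each hyperedge the $r$ sublinks, once sorted, have heaviness at least $t+1,\,2t+1,\,\dots,\,rt+1$; one fills in the lightest sublinks first and at stage $j$ has at least $jt$ choices against at most $jt-1$ already-used vertices. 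Thus the class $\cH'$ bounded via star-freeness is this \emph{narrower} set, not the all-$(t+1)$-heavy hyperedges. The hierarchy $E_1,\dots,E_r$ (with $E_i$ the $((i-1)t+1)$-heavy, $it$-light $(r-1)$-sets) is then used, exactly as in Claim~\ref{firstcla}, to charge \emph{every} hyperedge outside $\cH'$---including many with all sublinks $(t+1)$-heavy---because any such hyperedge fails the $j$th threshold for some $j$ and therefore has at least $j$ sublinks in $E_1\cup\dots\cup E_j$. Summing $it|E_i|$ over $i$ then dominates the count, giving $|\cH\setminus\cH'|\le t\sum_i|E_i|\le t\binom{n}{r-1}$. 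With this correction your plan is exactly the paper's.
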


The proof goes the same way as the first part of the proof of Theorem \ref{thmnew1}, hence we only present a sketch. 

\begin{proof}[\bf Sketch of proof]
For the lower bound, we consider the following $r$-graph. We take a set $U$ of $t$ vertices, add $n-t$ vertices and each $r$-set containing at least one vertex in $U$ as a hyperedge. Similar to the proof of the lower bound in Theorem \ref{thmnew1}, we can obtain that the resulting $r$-graph is $F(t,r)$-free. 

For the upper bound, we consider heavy $(r-1)$-edges and nice copies of $S_t^r$ analogously, in particular Lemma \ref{nice} extends to our setting. We define sets of $(r-1)$-edges $E_i$ for $i\le r$ as in the proof of Theorem \ref{thmnew1}. Let $\cH_i$ denote the set of $r$-edges that contain at least $i$ $(r-1)$-edges of $E_i$, and $\cH'$ denote the set of hyperedges such that their subedges of size $r-1$ have an ordering with the following property: the $i$th $(r-1)$-edge is $(it+1)$-heavy. Then, as in Claim \ref{clami}, for some $p$ and $q$, a nice $S_p^r$ could be used to find greedily an $F(t,r)$, thus $\cH'$ does not contain a nice $S_p^r$, thus does not contain an $S_q^r$, thus has size $O(n^{r-2})$ by the theorem of Erd\H os and Duke. 

Let $\cH_0$ denote the rest of the hyperedges. In the case $r=3$, we had one more type of hyperedges, those with two edges of $E_3$ and one edge of $E_2$. For larger $r$, there are more types of hyperedges in $\cH_0$. We consider a partition of $\cH_0$ into sets $\cH_{a_1,\dots,a_r}$, where the hyperedges in $\cH_{a_1,\dots,a_r}$ contain exactly $a_i$ $(r-1)$-edges from $E_i$.
Observe that there is an $i$ such that $a_1+\dots+a_i\ge i$ since otherwise $\cH_{a_1,\dots,a_r}$ would be a subhypergraph of $\cH'$. Now we apply the argument from Claim \ref{firstcla}, we count the hyperedges by taking an $(r-1)$-edge of $E_i$ first, and extending them to hyperedges at most $it$ ways. This way we may count a hyperedge multiple times. For any $i$, we obtain that $it|E_i|$ at at least $i|\cH_i|$, and for each possible sequence $a_1,\dots,a_r$, we add $a_i|\cH_{a_1,\dots,a_r}|$. Adding up these inequalities for each $i$, we have that $|\cH_{a_1,\dots,a_r}|$ appear with a coefficient of at least one, thus their sum is at least $|\cH_0|$. Therefore, we have $t\binom{n}{r-1}\ge t(|E_1|+\dots+|E_r|)\ge |\cH_1|+\dots+|\cH_r|+|\cH_0|$, completing the proof.
\end{proof}

\bigskip
\textbf{Funding}: 
The research of Cheng is supported by the National Natural Science Foundation of China (Nos. 12131013 and 12471334), Shaanxi Fundamental Science Research Project for Mathematics and Physics (No. 22JSZ009) and the China Scholarship Council (No. 202406290241). 

The research of Gerbner is supported by the National Research, Development and Innovation Office - NKFIH under the grant KKP-133819.

The research of Zhou is supported by the National Natural Science Foundation of China (Nos. 11871040, 12271337, 12371347) and the China Scholarship Council (No. 202406890088).


\begin{thebibliography}{99}
\bibitem{alon} N. Alon, C. Shikhelman, Many $T$ copies in $H$-free graphs, \textit{J. Combin. Theory Ser. B} \textbf{121} (2016) 146--172.

\bibitem{caro} Y. Caro, New results on the independence number, \textit{Technical report, Tel Aviv University}, 1979.

\bibitem{ChuFra} F. Chung, P. Frankl, The maximum number of edges in a 3-graph not containing a given star, \textit{Graphs Combin.} {\bf 3}(1) (1987) 111--126.

\bibitem{duer} R. Duke, P. Erd\H os, Systems of finite sets having a common intersection, \textit{in Proc. 8th
S-E Conf. on Comb., Graph Theory and Computing}, (1977) 247--252.

\bibitem{Er} P. Erd\H{o}s, Topics in combinatorial analysis, \textit{in Proc. Second Louisiana Conf. on Comb., Graph Theory and Computing} (RC Mullin et al., eds.), (1971) 2--20.

\bibitem{EGa}  P. Erd\H os, T. Gallai, On maximal paths and circuits of graphs, \textit{Acta Math. Acad. Sci. Hung}. \textbf{10} (1959) 337--356.

\bibitem{ESi} P. Erd\H{o}s, M. Simonovits, A limit theorem in graph theory, \textit{Studia Sci. Math. Hungar.} {\bf 1} (1966) 51--57.

\bibitem{ESt} P. Erd\H{o}s, A. Stone, On the structure of linear graphs, \textit{Bull. Amer. Math. Soc.} {\bf 52} (1946) 1087--1091.

\bibitem{EFGG} P. Erd\H{o}s, Z. F\"{u}redi, R. Gould, D. Gunderson, Extremal graphs for intersecting triangles, \textit{J. Combin. Theory Ser. B} {\bf 64}(1) (1995) 89100.

\bibitem{gerb} D. Gerbner, The Tur\'an number of Berge book hypergraphs, \textit{SIAM J. Discrete Math.} {\bf 38}(4) (2024) 2896--2912.

\bibitem{ger} D. Gerbner, Generalized Tur\'an results for disjoint cliques, \textit{Discrete Math.} {\bf 347}(7) (2024) 114024.

\bibitem{Ge1} D. Gerbner, On non-degenerate Tur\'{a}n problems for expansions, \textit{European J. Combin.} {\bf 124} (2025) 104071.

\bibitem{gmv} D. Gerbner, A. Methuku, M. Vizer, Generalized Tur\'an problems for disjoint copies of graphs, \textit{Discrete Math.} {\bf342}(11) (2019) 3130–3141.


\bibitem{LLP} B. Lidick\'{y}, H. Liu, C. Palmer, On the Tur\'{a}n number of forests, \textit{Electron. J. Combin.} \textbf{20}(2) (2013) P62.

\bibitem{Ma} W. Mantel, Problem 28, \textit{Wiskundige Opgaven} {\bf 10} (1907) 60--61.

\bibitem{Mu} D. Mubayi, A hypergraph extension of Tur\'an's theorem, \textit{J. Combin. Theory Ser. B} \textbf{96} (2006) 122--134.

\bibitem{MuVe} D. Mubayi, J. Verstra\"ete, A survey of Tur\'an problems for expansions, \textit{Recent Trends in Combinatorics}, (2016) 117--143.

\bibitem{MuVe2} D. Mubayi, J. Verstra\"{e}te, Proof of a conjecture of Erd\H{o}s on triangles in set-systems, \textit{Combinatorica}, {\bf 25}(5) (2005) 599--614. 

\bibitem{pttw} C. Palmer, M. Tait, C. Timmons, A. Wagner, Tur\'an numbers for Berge-hypergraphs and related extremal problems, \textit{Discrete Math.} {\bf 342}(6) (2019) 1553–1563.

\bibitem{tly} Y. Tang, T. Li, G. Yan, Tur\'an numbers of expanded intersecting cliques in 3-graphs. \textit{J. Oper. Res. Soc. China} {\bf 12} (2024) 952--964. 

\bibitem{Tu} P. Tur\'{a}n, Egy gr\'{a}felm\'{e}leti sz\'{e}ls\H{o}\'{e}rt\'{e}kfeladatr\'{o}l, \textit{Mat. Fiz. Lapok}, {\bf 48} (1941) 436--452. 

\bibitem{wei} V. K. Wei, A lower bound on the stability number of a simple graph, \textit{Bell Laboratories Technical Memorandum}, No. 81--11217--9, 1981.

\bibitem{zcggyh} X. Zhu, Y. Chen, D. Gerbner, E. Gy\H ori, H. Hama Karim, The maximum number of triangles in $F_k$-free graphs. \textit{European J. Combin.} {\bf 114} (2023) 103793.
\end{thebibliography}
\end{document}